\documentclass[a4paper,12pt]{amsart}
\usepackage[
hmarginratio={1:1},
vmarginratio={1:1},
textwidth=15.5cm,
textheight=21cm,
heightrounded
]{geometry}

\usepackage[utf8]{inputenc}
\usepackage{amsfonts,amstext,amsmath,amsthm,amscd,amssymb}
\usepackage[sc]{mathpazo}
\usepackage[dvipsnames]{xcolor}
\usepackage[pagebackref=false]{hyperref}
\usepackage{url}

\definecolor{dark-red}{rgb}{0.4,0.15,0.15}
\definecolor{dark-blue}{rgb}{0.15,0.15,0.4}
\definecolor{dark-green}{rgb}{0.15,0.4,0.15}
\hypersetup{
	colorlinks,
	linkcolor=dark-red,
	citecolor=dark-blue,
	urlcolor=dark-green
}

\numberwithin{equation}{section}

\theoremstyle{plain}
\newtheorem{theorem}{Theorem}[section]
\newtheorem{proposition}[theorem]{Proposition}
\newtheorem{lemma}[theorem]{Lemma}
\newtheorem{corollary}[theorem]{Corollary}

\theoremstyle{definition}
\newtheorem{definition}[theorem]{Definition}
\newtheorem{remark}[theorem]{Remark}
\newtheorem{example}[theorem]{Example}

\newcommand{\ZZ}{\mathbb{Z}}

\newcommand{\tr}{\operatorname{tr}}
\newcommand{\wgt}{\operatorname{wg}}
\newcommand{\sq}{\operatorname{sq}}
\newcommand{\sfpart}{\operatorname{SF}}
\newcommand{\supp}{\operatorname{supp}}
\newcommand{\onev}{\mathbf{1}}

\title[Squarefree Matrix Formulas for CWR]
{Squarefree Matrix Formulas for the $CWR$ Invariant\\ of Alternating Knots and Links}

\author{Micha{\l} Jab{\l}onowski}

\address{Institute of Mathematics, Faculty of Mathematics, Physics and
	Informatics, University of Gda\'nsk, 80-308 Gda\'nsk, Poland}

\email{\href{mailto:michal.jablonowski@gmail.com}
	{michal.jablonowski@gmail.com}}

\subjclass[2020]{57K10, 05C38, 05C50}
\keywords{alternating links, Tait graphs, CWR invariant, weighted adjacency matrices,
	squarefree extraction, simple cycles, log-determinant}

\date{\today}

\begin{document}
	
	\begin{abstract}
We give weighted-matrix formulas for the components of the $CWR$ invariant of
oriented non-split alternating links. After recalling the known trace formulas for
$CWR_{2}$ and $CWR_{3}$, we give a construction uniform in $k$: attaching an independent
commuting variable to each vertex of a consolidated Tait graph and extracting
the squarefree part of the resulting trace isolates simple cycles from closed
walks. This yields a formula for $CWR_k$ for every $k\ge 3$, a
log-determinant generating polynomial for each of the two Tait graphs, and an
equivalent M\"obius-inversion formula over principal submatrices. 

Specializing the uniform formula, we obtain explicit closed weighted formulas for $CWR_{4}$ and
$CWR_{5}$. We also record a bipartiteness
criterion for the vanishing of all odd components and a characteristic-polynomial
formula for the unweighted specialization of the first
nonvanishing odd component. The graph-theoretic
constructions apply to arbitrary finite simple loopless weighted graphs; the
alternating-link hypothesis enters through the invariance theorem for $CWR$.
\end{abstract}
	
	\maketitle
	%\tableofcontents
	
	\section{Introduction}
\label{sec:introduction}

The earlier $WRP$ invariant was introduced in \cite{JabWRP24} using the two
checkerboard (Tait) graphs of a reduced alternating diagram. After parallel
edges are consolidated, the resulting weighted graphs encode crossing data, and
weighted cycle sums produce the invariant. The $CWR$ invariant was subsequently
introduced in \cite{JabCWR24} as a refinement which separates these
contributions according to cycle length.

The first two terms, $CWR_2$ and $CWR_3$, admit simple trace formulas in terms
of weighted adjacency matrices \cite[Proposition~3.5]{JabCWR24}. The purpose
of the present paper is to continue this matrix approach. We give a
construction valid for every $k\ge3$: vertex variables
record the vertices visited by a closed walk, and squarefree extraction removes
exactly those walks which revisit a vertex. This gives a uniform trace formula
for the weighted simple-cycle sums defining $CWR_k$. Summing the trace formulas
over all lengths produces a log-determinant generating polynomial, and Boolean
M\"obius inversion gives an equivalent formula involving only traces of
principal submatrices. The specialization at $t=1$ recovers the original
$WRP$ invariant and gives a direct generating-function explanation of the
relation between $WRP$ and $CWR$ recorded in \cite[Sec.~3.1]{JabCWR24}.
Specializing the uniform formula to $k=4$ and $k=5$ then yields explicit
closed weighted formulas for $CWR_{4}$ and $CWR_{5}$.

The constructions in
Sections~\ref{sec:vertex-variables}--\ref{sec:low-order} are statements
about arbitrary finite simple loopless weighted graphs. Knot theory enters by
applying them to the two consolidated weighted Tait graphs and then invoking the
invariance theorem of \cite{JabCWR24}. Thus, the main issue is not the
existence of general simple-cycle counting methods, which is classical, but the
form in which those methods specialize to the weighted Tait graphs underlying
the $CWR$ sequence.

\subsection*{Relation to earlier work}

The mechanism used here has several important precedents. Nilpotent commuting
generators have been used to eliminate repeated vertices in the zeon-adjacency
work of Staples and Schott--Staples \cite{Sta08,SS08,SS11}. Giscard, Rochet
and Wilson \cite{GRW18} construct a Hopf algebra of self-avoiding hikes in
which simple cycles are irreducible elements and obtain them from a logarithm
with respect to induced-subgraph convolution. Giscard, Kriege and Wilson
\cite{GKW19} turn related induced-subgraph identities into an exact
simple-cycle and simple-path counting algorithm. Sinclair's squarefree
algebra formalism gives multilinear trace--log coefficients which, for graph
matrices, are cyclic products indexed by vertex subsets
\cite[Theorems~6.3 and~6.5]{Sin26}.

There is also a classical generating-function precedent for the
trace--determinant part of the construction. Flajolet and Sedgewick
\cite[Sec.~V.5, Proposition~V.6]{FS09} treat a graph with a formal weighted
adjacency matrix $G$ and show that powers of $G$ generate weighted walks,
while the entries of $(I-zG)^{-1}$ generate weighted walks (called paths there) of all lengths. For rooted
circuits they obtain a logarithmic-derivative formula involving
$\det(I-zG)$; see also \cite[Notes~V.26 and~V.28]{FS09}.
They further give a logarithmic determinant formula for unrooted circuits
\cite[Note~V.35]{FS09}.

These classical circuit formulas do not impose vertex self-avoidance: the
circuits being enumerated may revisit vertices. The additional step in the
present construction is the vertex marking by $X$ followed by squarefree
extraction, which removes precisely such repeated-vertex closed walks and
therefore leaves the weighted simple cycles occurring in $CWR$.

Accordingly, we do not claim the abstract principle that a logarithmic
determinant can isolate simple-cycle data as new. Our contribution is the
weighted Tait-graph realization tailored to $CWR$, the uniform squarefree
trace formula for all $k\ge3$, the principal-submatrix form, the explicit
formulas for $CWR_4$ and $CWR_5$ obtained from it, and the direct recovery of
$WRP$.

There is also a geometric and lattice-theoretic reason that cycle data of
Tait graphs are natural in the alternating setting. Greene
\cite{Gre17} showed that, for an alternating diagram, the
Gordon--Litherland pairing of a checkerboard surface is naturally
isometric, up to the appropriate sign, to the integral flow lattice of
the corresponding Tait graph. Under this identification the
irreducible elements of the flow lattice are precisely the oriented
cycles of the graph. Moreover, the discrete Torelli theorem for graphs
implies that the flow lattice determines a bridgeless graph up to
$2$-isomorphism; see \cite[Sec.~5]{Gre17}. Thus, the cycle structure
used in the definition of $CWR$ is closely related to the intrinsic
lattice structure carried by the checkerboard surfaces.

There is also an earlier knot-theoretic precedent for combining
cycle data with Whitney $2$-isomorphism. Murasugi and Przytycki
\cite[Sec.~3]{MP93} introduced the \emph{cycle index} $\alpha(G)$ of a
graph, defined through cyclically independent families of edges. They
proved that the cycle index is additive over the blocks of a graph and,
in particular,
$
G_{1}\text{ and }G_{2}\text{ are $2$-isomorphic}
\;\Longrightarrow\;
\alpha(G_{1})=\alpha(G_{2})
$
\cite[Proposition~3.7]{MP93}.

Their cycle index should not be confused with the data recorded by $CWR$.
The invariant $\alpha(G)$ is a single extremal integer measuring the
maximum size of a cyclically independent edge family, whereas $CWR$
retains the weighted contribution of every simple cycle and separates
these contributions according to their lengths. Nevertheless, the
result of Murasugi--Przytycki provides an early knot-theoretic precedent
for the stability of cycle-based graph information under
$2$-isomorphism.

There is a corresponding integral and matrix-theoretic formulation.
If $\partial$ is an oriented incidence matrix of a graph $G$, its
integral flow lattice is
$
\mathcal F(G)=\ker(\partial)\cap\mathbb Z^{E(G)}.
$
For an integral basis with basis vectors forming the columns of a matrix
$\iota$, Dochtermann, Meyers, Samavedam and Yi \cite{DMSY21} consider
the associated dual Laplacian
$
L^{*}=\iota^{\mathsf T}\iota.
$
For a plane graph, the boundaries of the bounded faces can be chosen as
a cycle basis, and with this choice $L^{*}$ is the reduced Laplacian of
the planar dual; see \cite[Propositions~3.7 and~4.2]{DMSY21}. Thus
planar duality connects cycles of one graph not only with the cut
structure of the dual, but also with a natural matrix attached to an
integral cycle basis.

This concerns a basis of the integral flow lattice, whereas $CWR$
uses the full collection of simple cycles rather than a chosen cycle
basis; the two types of data should therefore not be identified.

The present invariant contains additional information, namely the crossing weights
on the edges and the separation of cycle contributions according to
their lengths.

There is also a classical polynomial-theoretic precedent for passing from
plane-graph data to invariants of alternating links. Under the medial
construction, a plane graph $G$ gives an alternating link diagram for which
$G$ is one of the checkerboard graphs, and the classical Tutte--Jones
correspondence expresses the Jones polynomial of an alternating link in terms
of the Tutte polynomial of a Tait graph; see, for example, \cite{Prz06, Thi87}.
Thus, Tait graphs have long provided a bridge between graph invariants and
alternating-link invariants. The present construction uses the same
graph--link correspondence, but has a different enumerative target: rather than
summing over spanning subgraphs, $CWR$ records weighted simple cycles,
separated according to their lengths.

A still closer historical precedent for the simultaneous use of weights,
plane duality and knot diagrams appear in Przytycki's treatment of
\emph{chromatic graphs} \cite[Sec.~V.1.1]{Prz06}. There, signed and
edge-colored plane graphs are equipped with a deletion--contraction
polynomial, the dual graph carries the corresponding dual edge data, and
the resulting invariant satisfies a planar-duality relation
\cite[Lemma~V.1.20]{Prz06}. Przytycki also notes that this polynomial is a
$2$-isomorphism invariant for connected chromatic graphs.

The enumerative content is nevertheless different from that of $CWR$.
The chromatic-graph polynomial is a deletion--contraction state sum over
spanning subgraphs, whereas, after consolidation of parallel edges, $CWR$
records the products of the crossing weights around individual simple cycles
and separates these contributions according to cycle length.

At a more elementary graph-theoretic level, the numbers of simple cycles
of the various lengths form what is sometimes called the \emph{cycle vector}
of a graph; see, for example, \cite[Sec.~1.13.1]{Yad23}. Such a vector is
preserved by graph isomorphism. For $k\ge3$, the $CWR$ sequence may be
viewed as a weighted two-graph refinement of this idea: instead of retaining
only the number of $k$-cycles, it records the products of the edge weights
around them, separately for the two consolidated Tait graphs. The term
$CWR_2$ is treated separately and records the corresponding weighted edge
sum.

There is also a classical determinant-theoretic precursor to the appearance
of cycles in the present formulas. Sachs' expansion of the adjacency
characteristic polynomial expresses\\
$
\Phi_G(\lambda)=\det(\lambda I-A(G))
$
as
$
\Phi_G(\lambda)
=
\sum_{H\in\mathcal S(G)}
(-1)^{w(H)}2^{c(H)}
\lambda^{\,|V(G)|-|V(H)|},
$
where $w(H)$ denotes the number of connected components of $H$ and
$c(H)$ the number of its components that are cycles, $\mathcal S(G)$ is the family of Sachs subgraphs, whose connected
components are single edges or simple cycles;
see \cite[Theorem~5.2.9]{WW19}. Equivalent formulations in terms of
elementary subgraphs can be found in
\cite[Theorems~3.8 and~3.10]{Bap14} and
\cite[Theorem~8.2.3]{KK19}.

Thus, ordinary characteristic-polynomial coefficients already contain
cycle information, but in general, they combine it with contributions from
matchings and from disconnected unions of edges and cycles. The squarefree
trace--log construction used here has a different enumerative purpose:
it removes repeated-vertex closed walks and isolates individual weighted
simple cycles, while the variable $t$ retains their lengths. In this sense
$CWR$ separates precisely the cycle information that is amalgamated with
other Sachs-subgraph contributions in the ordinary characteristic polynomial.

There is also a classical Laplacian-matrix approach based directly on the two
checkerboard graphs of a link diagram. Lien and Watkins \cite{LW00} associate
signed plane graphs to the two checkerboard classes and prove that the signed
Laplacian matrices of a plane graph and its signed dual are Goeritz congruent.
Their argument is explicitly knot-theoretic: checkerboard graphs associated
with equivalent link diagrams have Laplacians related by Goeritz congruence,
and the duality of the two checkerboard graphs is reflected algebraically in
the corresponding quadratic forms. Thus, their work provides an earlier
matrix-theoretic use of essentially the same pair of planar graphs that
underlies $CWR$.

The present construction is different in two essential respects. We use
weighted adjacency matrices rather than Goeritz/Laplacian matrices, and the
quantities extracted from them are weighted simple-cycle sums separated by
length. Moreover, the signed-dual convention of \cite{LW00} assigns opposite
signs to corresponding primal and dual edges, whereas the variables $w,r$
used here record the crossing signs according to the convention of
\cite{JabCWR24}; consequently the signed Laplacians of \cite{LW00} should not
be identified directly with the weighted matrices used below.

A more recent knot-theoretic Laplacian construction is due to Silver and
Williams \cite{SW19}. Although its enumerative target is again different
from that of $CWR$, these constructions together illustrate the long-standing
role of matrices of checkerboard-associated graphs in encoding
knot-theoretic information.

The paper is organized as follows. Section~\ref{sec:cwr-background} recalls
$CWR$, $WRP$, and the known formulas for $CWR_2$ and $CWR_3$.
Sections~\ref{sec:vertex-variables} and~\ref{sec:logdet} develop the uniform
squarefree and log-determinant formulas, and
Section~\ref{sec:inclusion-exclusion} gives the equivalent
M\"obius-inversion form over principal submatrices.
Section~\ref{sec:low-order} specializes the uniform construction to the first
components beyond the previously known ones: it derives the explicit weighted
formulas for $CWR_4$ and $CWR_5$ and verifies them, together with the
M\"obius-inversion form, on an example. Finally,
Section~\ref{sec:odd-cycles} treats the odd components: it characterizes the
simultaneous vanishing of all of them by the bipartiteness of the corresponding
Tait graph, and expresses the first nonvanishing one through a coefficient of
the characteristic polynomial.

\section{The \texorpdfstring{$CWR$}{CWR} invariant and the previously known matrix formulas}
	\label{sec:cwr-background}
	
	We recall the definitions needed below from \cite{JabCWR24}.
	
	\begin{definition}
		A \emph{diagram} $D$ of a knot or link is a generic projection to the plane,
		viewed as a $4$-valent plane graph together with over--under information at
		each crossing. The diagram is \emph{alternating} if, while travelling along
		each component, over-crossings and under-crossings occur alternately. A
		diagram is \emph{reduced} if it has no nugatory crossings. In what follows we
		consider reduced alternating diagrams of non-split links and use the
		checkerboard-coloring convention fixed in \cite{JabCWR24}, in which a region of
		the type marked $A$ there is black.
	\end{definition}
	
	A \emph{region} of $D$ is a face of the underlying plane graph. A
	checkerboard coloring assigns the colors black and white to the regions so
	that regions sharing an edge have opposite colors. Associated with such a
	coloring are two planar dual graphs, the \emph{Tait graphs} $G_B$ and $G_W$.
	The vertices of $G_B$ correspond to black regions and the vertices of $G_W$
	correspond to white regions; each crossing of $D$ determines one edge in each
	of the two graphs.

This checkerboard construction has a classical signed-graph formulation.
Przytycki \cite[Lemma~V.1.17 and pp.~19--20]{Prz06} observes that the two
checkerboard colorings of a connected link diagram produce dual plane graphs:
duality interchanges the black--white edge attribute, while, for an oriented
diagram, the sign assigned to an edge is the sign of the corresponding
crossing. This distinction is useful in the present setting. The variables
$w,r$ used in $CWR$ encode the crossing sign, rather than the black--white
attribute coming from the local checkerboard configuration.

The planar duality between the two Tait graphs also has an algebraic
interpretation. For a connected plane multigraph $G$ with a plane dual
$G^{*}$, an edge set is the edge set of a cycle in $G$ if and only if
the corresponding dual edge set is a bond of $G^{*}$
\cite[Proposition~4.6.1]{Die17}. Equivalently, under the natural
identification of their edge sets, the cycle space of $G$ is the cut
space of $G^{*}$,
$
\mathcal C(G)=\mathcal B(G^{*});
$
see \cite[Proposition~4.6.2]{Die17}. Thus, before consolidation, the
cycle structure of either Tait graph is naturally dual to the cut
structure of the other.

It is important, however, to distinguish this classical plane duality
from the weighted simple graphs used in the definition of $CWR$.
The graphs $G_B^{*}$ and $G_W^{*}$ below are obtained by consolidating
parallel edges, and after this operation they need not form a dual pair
in the literal plane-multigraph sense. Accordingly, the cycle--bond
duality provides a structural background for the two Tait graphs, rather
than an identification of the two coordinates of $CWR$.

	Checkerboard graphs are among the oldest combinatorial encodings of knot
	diagrams and played an important role in the early tabulation of alternating
	knots; see, for example, the historical account in
	\cite[Chapter~11]{AKT21}.
	
	When $L$ has more than one component, we regard $L$ as an oriented link
	and fix an orientation of each component throughout. Crossing signs, and
	hence the edge weights used below, are taken with respect to these
	orientations. Accordingly, all statements concerning multi-component
	links are understood in the oriented category. For a knot, reversing its
	orientation does not change any crossing sign, so no orientation needs to
	be specified separately.
	
	Following \cite{JabCWR24}, every edge $e$ of $G_B$ and $G_W$ is assigned the
	weight
	$$
	\operatorname{wg}(e)=
	\begin{cases}
		w,&\text{if the corresponding crossing is positive},\\
		r,&\text{if the corresponding crossing is negative},
	\end{cases}
	$$
	with the crossing-sign convention used there.
	
	\begin{definition}
		The weighted simple graph $G_B^*$ is obtained from $G_B$ by consolidating all
		multiple edges joining the same pair of vertices into one edge whose weight is
		the product of the weights of the consolidated edges. The graph $G_W^*$ is
		defined analogously from $G_W$.
	\end{definition}
	
	Thus, if parallel edges $e_1,\ldots,e_s$ are consolidated to one edge $e$,
	then
	$
	\operatorname{wg}(e)=\prod_{j=1}^{s}\operatorname{wg}(e_j).
	$
	In particular, every edge weight in $G_B^*$ or $G_W^*$ is a monomial in
	$w$ and $r$.
	
	\begin{definition}
		For $i>2$, define
		$
		CB_i(w,r)=
		\sum_{C_B}\prod_{e\in E(C_B)}\operatorname{wg}(e),
		$
		where the sum ranges over all unoriented simple cycles $C_B$ of length $i$ in
		$G_B^*$. For $i=2$, define
		$
		CB_2(w,r)=\sum_{e\in E(G_B^*)}\operatorname{wg}(e).
		$
		Define $CW_i(w,r)$ analogously using $G_W^*$ in place of $G_B^*$.
		For every integer $i>1$, set
		$
		CWR_i(w,r)=\bigl(CB_i(w,r),CW_i(w,r)\bigr).
		$
		If there is no cycle of the required length, the corresponding polynomial is
		understood to be zero. Finally,
		$
		CWR(L)=\bigl(CWR_2,CWR_3,CWR_4,\ldots\bigr),
		$
		with trailing pairs $(0,0)$ omitted.
	\end{definition}
	
	The fact that this construction is independent of the chosen reduced
	alternating diagram is the basic invariance theorem proved in
	\cite[Theorem~2.1]{JabCWR24}.
	
	\begin{theorem}[CWR invariance \cite{JabCWR24}]
		\label{thm:cwr-invariance}
		Let $L$ be an oriented non-split alternating link, and let
		$D_1$ and $D_2$ be reduced alternating diagrams of $L$, with the
		component orientations induced from $L$. Then
		$
		CWR(D_1)=CWR(D_2).
		$
		Consequently, the sequence $CWR(L)$ and each component $CWR_i(L)$
		are invariants of the oriented non-split alternating link $L$.
		For knots, the orientation may be suppressed from the notation.
	\end{theorem}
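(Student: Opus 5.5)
The plan is to reduce the invariance of $CWR$ to two ingredients: the Tait flyping theorem of Menasco--Thistlethwaite, and a local check that a single flype leaves the two weighted, consolidated Tait graphs unchanged as far as weighted simple-cycle sums are concerned. By the Menasco--Thistlethwaite theorem, two reduced alternating diagrams $D_1,D_2$ of the same oriented non-split alternating link are related by a finite sequence of flypes and planar isotopies on $S^2$. Each intermediate diagram is again reduced and alternating, and it carries the induced orientation. It therefore suffices to prove $CB_i(D)=CB_i(D')$ and $CW_i(D)=CW_i(D')$ for every $i\ge2$ whenever $D'$ is obtained from $D$ by one flype or one planar isotopy.

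Planar isotopies, including moving the point at infinity on $S^2$, do not change the two Tait graphs up to isomorphism. They also preserve the checkerboard coloring convention, since the $A$-type region rule is local, and they preserve crossing signs. So only flypes need attention.

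\emph{Crossing signs.} A flype rotates a tangle $T$ by $\pi$ and moves one crossing $c$ from one side of $T$ to the other. The orientations of the strands in $T$ are carried along by the rotation, so every crossing inside $T$ keeps its sign. I will check that the moved crossing $c$ also keeps its sign, because the two strands passing through it keep their relative directions. Hence the multiset of edge weights $w,r$ is preserved.

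\emph{Effect on the Tait graphs.} A flype acts on each Tait graph as a Whitney twist. The tangle $T$ meets each graph in a piece attached at two vertices $\{u,v\}$, and the twist re-attaches that piece with $u$ and $v$ exchanged. The edge $e_c$ coming from $c$ joins $u$ and $v$ in one of the graphs; in the other graph it is an edge in series at the $2$-cut. I then argue as follows.

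Twisting at a $2$-separation $\{u,v\}$ gives a bijection between simple cycles. A cycle lying on one side is kept unchanged. A cycle crossing the cut is a union of a $u$--$v$ path on each side, and it maps to the union of the same two paths, one of them reflected. This bijection preserves the edge sets, and hence both the lengths and the products of weights.

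Parallel edges between $u$ and $v$ stay parallel. Edges in series stay in series, although their position relative to the cut may change. Consolidation therefore commutes with the twist, and the induced bijection on edges of $G^*$ preserves consolidated weights. In particular, for $i=2$, the total edge weight is preserved.

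\emph{Main obstacle.} The hardest step is the precise bookkeeping in the graph where $c$ contributes an edge incident to the $2$-cut but not joining $u$ and $v$. Here moving $c$ across $T$ relocates the edge $e_c$ from one side to the other. I would show that this is still a $2$-isomorphism, namely a twist composed with moving a series edge along its series class. Since any simple cycle through one edge of a series class uses the entire class, its length and weight product are unchanged.

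I also need consolidation to interact correctly here, because relocating $e_c$ could create or destroy a parallel pair. I would try to rule this out by showing that the relocated edge is parallel to an existing edge in $D'$ exactly when it was parallel in $D$. The argument is that the two regions $e_c$ joins are determined by the $2$-cut regions, which the flype preserves. Failing that, I would check directly that the consolidated cycle sums agree in both configurations.
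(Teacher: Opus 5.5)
Your outline is the standard flype argument, but its first step claims more than the cited theorem gives. The flyping theorem of Menasco--Thistlethwaite (and Kindred's geometric proof \cite{Kin22}) relates reduced alternating diagrams of a \emph{prime} non-split link. You apply it to an arbitrary non-split alternating link, so composite links are not covered. The paper does not reprove the statement at all: it imports it from \cite[Theorem~2.1]{JabCWR24}, and Remark~\ref{rem:flypes} makes exactly this point, that the flyping theorem accounts only for the prime case.

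You can close the gap within your framework. By Menasco's theorem, a reduced alternating diagram of a composite link is visibly composite, and a diagrammatically prime one represents a prime link. A decomposing circle for $D=D_1\# D_2$ runs through one black and one white region, and these become cut vertices of $G_B$ and $G_W$. The factors $D_1,D_2$ are again reduced alternating, with the induced orientations and a compatible colouring. No simple cycle passes through a cut vertex from one side to the other, and edges on different sides cannot be parallel. Hence $CWR(D)=CWR(D_1)+CWR(D_2)$ holds already at the diagram level. Uniqueness of prime decomposition (Schubert, and Hashizume for links) then reduces the general case to the prime one, where your flype argument applies.

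Your hedged treatment of the second Tait graph can be made precise without case checking. Let $W$, $M$, $E$ be the white regions to the left of $c$, between $c$ and $T$, and to the right of $T$.
\begin{itemize}
\item Before the flype, $e_c$ joins $W$ to $M$, and the white part of $T$ is attached with its left terminal at $M$ and its right terminal at $E$.
\item Afterwards, that part is attached with its left terminal at $W$ and its right terminal at $M'$, and $e_c$ joins $M'$ to $E$.
\end{itemize}
This is a Whitney twist of the white part of $T$ at $\{M,E\}$, followed by a twist of $e_c$ together with that part at $\{W,E\}$. In general $e_c$ is not in series with any single edge, so describing the move as sliding an edge along its series class is not accurate.

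Thus a flype induces a sign-preserving isomorphism between the cycle matroids of the unconsolidated Tait graphs. These graphs are loopless, so parallel classes are exactly the $2$-element circuits and are matched automatically. The cycle matroid of $G^*$ is the simplification, and every $CB_i$, $CW_i$ with $i\ge 2$ is a function of this matroid once each parallel class is weighted by the product of its weights. So the creation or destruction of parallel pairs, which you flag as the main obstacle, cannot occur.

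Your sign argument for $c$ is fine. The tangle formed by $c$ and $T$ has two strands, so the strands at $c$ are co-oriented before the flype if and only if they are co-oriented after it, and the handedness of the half-twist is unchanged.
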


\begin{remark}
	\label{rem:flypes}
	Theorem~\ref{thm:cwr-invariance} is invoked here in the form proved
	in \cite{JabCWR24}; the following discussion is only a geometric
	interpretation of the prime case and is not used as a proof of the
	full invariance statement.
	
	For a prime non-split alternating link, the diagrammatic background
	can be viewed particularly clearly through the flyping theorem.
	Menasco--Thistlethwaite proved that any two reduced alternating
	diagrams of such a link are related by flypes, and Kindred
	\cite{Kin22} subsequently gave an entirely geometric proof.
	In Kindred's formulation, a flype corresponds to an isotopy of one
	checkerboard surface together with a re-plumbing of the other
	\cite[Proposition~4.10 and Theorem~4.11]{Kin22}.
	
	On the Tait-graph side, the corresponding operation is closely
	related to Whitney $2$-isomorphism and hence to preservation of the
	underlying cycle matroid, whose circuits are precisely the cycles
	of the graph \cite[Sec.~10.11]{Yad23}. For an earlier
	knot-theoretic discussion of the corresponding two-vertex graph
	operation, described as mutation and related to the Whitney twist,
	see also \cite[Exercise~V.1.12]{Prz06}.
	
	Thus, in the prime case, the flyping theorem gives a geometric
	explanation for the stability of the cycle data entering $CWR$.
	For the general non-split case considered in
	Theorem~\ref{thm:cwr-invariance}, invariance is supplied by the
	cited theorem \cite{JabCWR24}, rather than being deduced here solely
	from the prime-case flyping statement.
\end{remark}
	
	\begin{remark}
		\label{rem:cr-writhe}
		When $L$ has more than one component, orientations are understood as fixed
		when crossing signs and writhe are used. Two numerical invariants are
		already visible in the first component. Since every reduced alternating
		diagram realizes the crossing number of the represented link, and since all
		reduced alternating diagrams of an oriented link have the same writhe, the
		formulas of \cite[Proposition~3.1]{JabCWR24} give
		$
		\operatorname{cr}(L)=
		\left.(\partial_{w}+\partial_{r})CB_2(w,r)\right|_{w=r=1},
		\;
		\operatorname{writhe}(L)=
		\left.(\partial_{w}-\partial_{r})CB_2(w,r)\right|_{w=r=1}.
		$
		In particular $\sum_{e\in E(G_B^{*})}\deg\operatorname{wg}(e)=\operatorname{cr}(L)$,
		and the same holds for $G_W^{*}$; we use this below as a consistency check on
		computed values.
	\end{remark}
	
	\begin{remark}
		\label{rem:signature}
		There is a further classical relation between the checkerboard data
		and the signature. With the standard checkerboard shading used by
		Traczyk \cite{Tra04}, a connected reduced alternating diagram satisfies
		$
		\sigma(L)
		=
		-\frac{1}{2}\operatorname{writhe}(D)
		+\frac{1}{2}(W-B),
		$
		where $B$ and $W$ denote the numbers of black and white regions,
		respectively; see \cite[Theorem~2]{Tra04}. Since consolidation of
		parallel edges does not change the vertex sets,
		$
		B=|V(G_B^*)|,
		\;
		W=|V(G_W^*)|.
		$
		Thus, the checkerboard graphs underlying $CWR$, together with the
		writhe already recovered from $CWR_2$ in
		Remark~\ref{rem:cr-writhe}, also contain the diagrammatic quantities
		appearing in this classical formula for the signature.
		
		If the black--white convention used here is opposite to Traczyk's
		standard shading convention, the roles of $B$ and $W$ in the displayed
		formula must of course be interchanged.
	\end{remark}

	The relation with the earlier invariant is particularly simple. In
	\cite{JabWRP24}, from $G_B^*$ and $G_W^*$ one forms directed graphs $G_B'$ and
	$G_W'$ by replacing every edge by two oppositely oriented edges with the same
	weight, and $WRP$ is the unordered pair obtained by summing the products of
	edge weights over directed cycles. Thus, an edge $e$ contributes the directed
	$2$-cycle of weight $\operatorname{wg}(e)^2$, while every unoriented simple
	cycle of length $k\ge3$ contributes two directed cycles, one in each
	orientation. Consequently
	\begin{equation}
		\label{eq:wrp-cwr}
		WRP(L)=
		\left\{
		CB_2(w^2,r^2)+2\sum_{k\ge3}CB_k(w,r),\;
		CW_2(w^2,r^2)+2\sum_{k\ge3}CW_k(w,r)
		\right\}.
	\end{equation}
	Equivalently, in the tuple notation of $CWR$,
	$$
	WRP(w,r)=
	\left\{CWR_2(w^2,r^2)+2\sum_{k>2}CWR_k(w,r)\right\},
	$$
	where the braces mean that the two coordinates are regarded as an unordered
	pair. This is the relation stated in \cite[Sec.~3.1]{JabCWR24}; here it also
	follows directly from the original definition in
	\cite[Secs.~2.1--2.2]{JabWRP24}. The convention that a doubled edge
	contributes a directed $2$-cycle is the one used there: for the trefoil,
	$CWR(K3a1)=\bigl((3w,w^{3}),(w^{3},0)\bigr)$ gives
	$\{3w^{2}+2w^{3},\,w^{6}\}$, which is the value
	$WRP(K3a1)=\{w^{6},2w^{3}+3w^{2}\}$ computed in \cite[Sec.~2.3]{JabWRP24}.

	Three structural properties of $CWR$ proved in \cite{JabCWR24} are useful
	context for the matrix formulas and provide independent consistency checks.
	First, $CWR$ is additive under connected sum:
	$
	CWR(K_1\#K_2)=CWR(K_1)+CWR(K_2)
	$
	for alternating knots, and likewise for the connected-sum operation on
	oriented non-split alternating links with the gluing data and
	component orientations fixed
	\cite[Theorem~3.2]{JabCWR24}. Second, mutant alternating knots have the same
	$CWR$ invariant \cite[Proposition~3.3]{JabCWR24}. This is compatible with the
	classical graph-theoretic role of Whitney $2$-isomorphism: Greene
	\cite[Sec.~5]{Gre17} explains the relation between the flow lattices of Tait
	graphs, their $2$-isomorphism classes, and mutation of reduced alternating
	diagrams. 
	
	The appearance of block decompositions in link invariants has a classical
	precedent in Murasugi--Przytycki \cite{MP93}; in particular, their cycle
	index is additive over the blocks of a graph
	\cite[Proposition~3.7]{MP93}. The $CWR$ additivity considered here has
	the same elementary graph-theoretic source---a simple cycle cannot pass
	through a cut vertex from one block to another---although the invariant
	being recorded is different.

	Third, if $mL$ denotes the
	mirror image of an alternating link and
	$CWR_i(L;w,r)=(CB_i(L;w,r),CW_i(L;w,r))$, then
	\begin{equation}
		\label{eq:mirror-cwr}
		CWR_i(mL;w,r)
		=
		\bigl(CW_i(L;r,w),\,CB_i(L;r,w)\bigr)
	\end{equation}
	by \cite[Proposition~3.4]{JabCWR24}. Thus, mirroring exchanges the two Tait
	graphs and simultaneously interchanges the crossing-weight variables $w$ and
	$r$.

	We next recall the matrix notation and the formulas for the first two terms.
	
	\begin{definition}
		Let $G$ be a finite simple graph without loops, with vertices
		$v_1,\ldots,v_n$ and edge weights in $\ZZ[w,r]$. Its \emph{weighted
			adjacency matrix} is the symmetric matrix
		$
		\overline A(G)=(\overline a_{ij})_{i,j=1}^{n},
		\;
		\overline a_{ij}=
		\begin{cases}
			\operatorname{wg}(e),&\text{if $v_i$ and $v_j$ are joined by an edge $e$},\\
			0,&\text{otherwise}.
		\end{cases}
		$
		The corresponding ordinary adjacency matrix $A(G)$ is obtained by replacing
		every nonzero edge weight by $1$.
	\end{definition}
	
	For the two consolidated Tait graphs we write
	$
	\overline A_B=\overline A(G_B^*),\;
	\overline A_W=\overline A(G_W^*),
	$
	and denote their ordinary adjacency matrices by $A_B$ and $A_W$,
	respectively.
	
	\begin{remark}
		\label{rem:vertex-order}
		The constructions below do not depend on the chosen ordering of the vertices.
		Indeed, under a relabelling represented by a permutation matrix $P$, the
		weighted adjacency matrix changes to
		$
		M'=PMP^{-1}.
		$
		If the vertex variables are relabelled simultaneously, then
		$X'=PXP^{-1}$, and consequently
		$
		(X'M')^k=P(XM)^kP^{-1}.
		$
		Hence, the traces occurring below are unchanged, as are the corresponding
		determinants. This is the weighted version of the standard permutation
		similarity of adjacency matrices; compare \cite[Theorem~2.1.6]{KK19}.
	\end{remark}
	
	\begin{proposition}[{\cite[Proposition~3.5]{JabCWR24}}]
		\label{prop:cwr23}
		For an oriented non-split alternating link $L$,
		$$
		CWR_2(L)=
		\left(
		\frac{\tr(\overline A_B A_B)}{2},
		\frac{\tr(\overline A_W A_W)}{2}
		\right),
		$$
		and
		$$
		CWR_3(L)=
		\left(
		\frac{\tr(\overline A_B^3)}{6},
		\frac{\tr(\overline A_W^3)}{6}
		\right).
		$$
	\end{proposition}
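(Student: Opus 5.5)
The plan is to prove both identities for an arbitrary finite simple loopless graph $G$ with weights in $\ZZ[w,r]$, and then specialize to $G=G_B^*$ and $G=G_W^*$. Theorem~\ref{thm:cwr-invariance} then shows that the right-hand sides are invariants of $L$. Throughout, write $\overline A=\overline A(G)$, $A=A(G)$, with entries $\overline a_{ij}$ and $a_{ij}$. Both formulas come from expanding a trace as a sum over closed walks.

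For $CWR_2$, I expand
\[
\tr(\overline A A)=\sum_{i,j}\overline a_{ij}a_{ji}.
\]
Since $G$ is simple, $a_{ji}=1$ exactly when $v_i v_j$ is an edge and $a_{ji}=0$ otherwise. Hence the sum is $\sum_{(i,j)}\overline a_{ij}$, taken over ordered pairs of adjacent vertices. Each unordered edge $e$ appears for two ordered pairs, and $\overline a_{ij}=\overline a_{ji}=\wgt(e)$ by symmetry. So the sum equals $2\sum_{e}\wgt(e)=2\,CB_2$ (respectively $2\,CW_2$). Here $A$ is used rather than $\overline A$ so that the weight enters linearly and not squared.

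For $CWR_3$, I expand
\[
\tr(\overline A^3)=\sum_{i,j,l}\overline a_{ij}\overline a_{jl}\overline a_{li}.
\]
Because $G$ has no loops, a nonzero term requires $i\ne j$, $j\ne l$ and $l\ne i$. The three indices are therefore distinct, and $v_iv_jv_lv_i$ is a closed walk of length $3$ on three distinct vertices, that is, a triangle. Its term is the product of the weights of the triangle's three edges. It remains to check the count of ordered triples per triangle. Each unoriented triangle $\{v_i,v_j,v_l\}$ arises from exactly $3!=6$ ordered triples: $3$ choices of starting vertex times $2$ orientations. All six give the same product, since each uses the same three edges. Therefore $\tr(\overline A^3)=6\,CB_3$, and dividing by $6$ gives the formula.

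I do not expect a real obstacle. The only points needing care are the two uses of the hypotheses. Simplicity, meaning consolidation into $G^*$, makes $a_{ij}\in\{0,1\}$ and excludes multiple edges. Looplessness excludes degenerate length-$3$ walks such as $v_iv_iv_jv_i$ or $v_iv_jv_jv_i$. Such walks would otherwise contribute terms like $\overline a_{ii}$; they vanish because the diagonal of $\overline A$ is zero. I will state this explicitly, since it is exactly the step that fails for $k\ge4$: there, $\tr(\overline A^k)$ picks up non-simple closed walks such as back-and-forth traversals. This motivates the squarefree extraction developed later in the paper.
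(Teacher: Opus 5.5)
Your proof is correct and follows essentially the same closed-walk counting the paper relies on. The remark after the proposition explains the factor $1/2$ by the two ordered pairs per edge and the factor $1/6$ by the six closed walks (three starting vertices, two orientations) per triangle, and Proposition~\ref{prop:k3} rederives the $CWR_3$ formula from the point you single out, that the zero diagonal forces every closed $3$-walk to visit three distinct vertices; passing from the diagram-level identity to $L$ via Theorem~\ref{thm:cwr-invariance} is also exactly how the paper proceeds.
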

	
	\begin{remark}
		The factor $1/2$ in the formula for $CWR_2$ compensates for the two
		orientations in which an edge is traversed in the trace. For $CWR_3$, every
		unoriented triangle is represented by six closed walks: three choices of the
		starting vertex and two orientations. The unweighted case of the second
		formula is $c_{3}(G)=\tr(A^{3})/6$, which is \cite[Eq.~(1)]{HM71}.
	\end{remark}
%%%%%%%%%%%%

\section{Vertex variables and squarefree extraction}
\label{sec:vertex-variables}

The trace of a power of a weighted adjacency matrix records \emph{all}
closed walks of the given length, whereas the components of $CWR$ are sums
over \emph{simple} cycles. We organize the bookkeeping needed to remove the
repeated-vertex walks once and for all by attaching a variable to each vertex;
the repeated vertices are then detected by repeated variables, and the
correction terms become a single algebraic operation. This is the same basic
self-avoidance mechanism exploited by nilpotent (zeon) adjacency matrices in
\cite{SS08, SS11, Sta08}; our notation keeps the vertex variables explicit in
order to interface directly with the weighted $CWR$ polynomials. The explicit
corrections that this mechanism produces for $k=4$ and $k=5$ are carried out
in Section~\ref{sec:low-order}.

Throughout this section $G$ is a finite simple graph without loops with vertex
set $\{v_{1},\ldots,v_{n}\}$ and edge weights in $\ZZ[w,r]$, and
$M=(m_{ij})_{i,j=1}^{n}$ is its weighted adjacency matrix; thus $M$ is
symmetric with zero diagonal and $m_{ij}=0$ whenever $v_{i}v_{j}\notin E(G)$.
We write $[n]=\{1,\ldots,n\}$, and for a cycle $C$ in $G$ we put
$
\wgt(C)=\prod_{e\in E(C)}\wgt(e),
$
so that $CB_{i}=\sum_{C}\wgt(C)$, the sum being over the unoriented simple
cycles of length $i$ in $G_{B}^{*}$, and similarly for $CW_{i}$.

\begin{definition}
	\label{def:sq}
	Let $x_{1},\ldots,x_{n}$ be commuting indeterminates over
	$\ZZ[w,r]$ and put
	$
	X=\operatorname{diag}(x_{1},\ldots,x_{n}).
	$
	A monomial
	$x_{1}^{a_{1}}\cdots x_{n}^{a_{n}}$ is \emph{squarefree}
	(equivalently, \emph{multilinear}) if $a_i\le1$ for every $i$.
	
	For
	$
	P=\sum_{\alpha}c_{\alpha}x^{\alpha}
	\in \ZZ[w,r][x_{1},\ldots,x_{n}],
	$
	define its \emph{squarefree part} by
	$
	\sfpart(P)
	=
	\sum_{\alpha\in\{0,1\}^{n}}c_{\alpha}x^{\alpha}.
	$
	Thus $\sfpart(P)$ is obtained from $P$ by deleting every monomial
	divisible by $x_i^2$ for at least one $i$.
	We also define the $\ZZ[w,r]$-linear \emph{squarefree extraction}
	$
	\sq\colon
	\ZZ[w,r][x_{1},\ldots,x_{n}]
	\to
	\ZZ[w,r]
	$
	by
	$$
	\sq(P)
	=
	\sfpart(P)\big|_{x_{1}=\cdots=x_{n}=1}
	=
	\sum_{T\subseteq[n]}
	\Bigl[\prod_{i\in T}x_i\Bigr]P.
	$$
	Here $[\mu]P$ denotes the coefficient of the monomial $\mu$ in
	$P$. The constant monomial $1$ is regarded as squarefree. This
	convention is immaterial below, since the polynomials to which
	these operations are applied are homogeneous of positive degree
	in the variables $x_i$.
\end{definition}

The starting point is the classical adjacency-matrix interpretation of
walks: for an ordinary adjacency matrix $A$, the $(i,j)$-entry of $A^k$
counts the walks of length $k$ from $v_i$ to $v_j$; see, for example,
\cite[Theorem~2.3.4]{KK19} and \cite[Sec.~5.1]{WW19}. In particular,
diagonal entries count rooted closed walks. Equivalently, if
$\alpha_1,\ldots,\alpha_n$ are the adjacency eigenvalues of $G$, then
$
\tr(A^k)=\sum_{i=1}^{n}\alpha_i^k
$
is the number of closed walks of length $k$
\cite[Theorem~5.2.7]{WW19}. The construction below refines this classical
closed-walk count by recording the visited vertices and their edge weights,
so that repeated-vertex walks can subsequently be removed.

The following lemma is the weighted, vertex-marked version of
this standard observation: the edge weights record the weight of the walk,
while the variables $x_i$ record its visited vertices with multiplicity.

\begin{lemma}
	\label{lem:walk-expansion}
	For every integer $k\ge 1$,
	$$
	\tr\bigl((XM)^{k}\bigr)
	=\sum_{(i_{1},\ldots,i_{k})\in[n]^{k}}
	x_{i_{1}}x_{i_{2}}\cdots x_{i_{k}}\;
	m_{i_{1}i_{2}}m_{i_{2}i_{3}}\cdots m_{i_{k}i_{1}} .
	$$
	In particular $\tr((XM)^{k})$ is homogeneous of degree $k$ in
	$x_{1},\ldots,x_{n}$, and the summands with a nonzero coefficient are
	precisely the closed walks $v_{i_{1}}\to v_{i_{2}}\to\cdots\to
	v_{i_{k}}\to v_{i_{1}}$ of length $k$ in $G$, each counted with the product of
	the weights of its edges and with the product of the variables of the vertices
	it visits, multiplicities included.
\end{lemma}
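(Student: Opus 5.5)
The plan is to expand the trace by the ordinary rule for products of matrices, with entries in the commutative ring $\ZZ[w,r][x_1,\ldots,x_n]$. Since $X=\operatorname{diag}(x_1,\ldots,x_n)$, the matrix $XM$ has entries $(XM)_{ij}=x_i m_{ij}$; left multiplication by a diagonal matrix simply scales row $i$ by $x_i$. The $k$-fold product formula then gives
$$
\bigl((XM)^k\bigr)_{i_1 i_1}=\sum_{i_2,\ldots,i_k\in[n]} (XM)_{i_1i_2}(XM)_{i_2i_3}\cdots(XM)_{i_ki_1}.
$$
Summing over $i_1$ yields $\tr((XM)^k)$ as a sum over $[n]^k$. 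Because all entries commute, each summand can be rearranged as $x_{i_1}\cdots x_{i_k}\,m_{i_1i_2}\cdots m_{i_ki_1}$, which is the displayed identity. The case $k=1$ is included: the sum is $\sum_i x_i m_{ii}$, which vanishes since $M$ has zero diagonal.

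Homogeneity follows immediately, because every summand carries exactly $k$ factors $x_{i_j}$ and the $m_{ij}$ lie in $\ZZ[w,r]$. The sum is therefore homogeneous of degree $k$ in the $x$-variables. The cancellation of different tuples against each other does not affect this.

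For the walk interpretation, I would observe that $m_{i_ji_{j+1}}\neq 0$ (indices mod $k$) holds exactly when $v_{i_j}v_{i_{j+1}}\in E(G)$, since the edge weights are nonzero monomials. Hence the product $m_{i_1i_2}\cdots m_{i_ki_1}$ is nonzero precisely when consecutive vertices are adjacent, that is, when $(i_1,\ldots,i_k)$ determines a closed walk $v_{i_1}\to\cdots\to v_{i_k}\to v_{i_1}$. In that case the product is the product of the weights of the traversed edges, counted with repetition. Because $G$ is simple, the tuple determines the walk, so tuples and rooted closed walks of length $k$ correspond bijectively. The monomial $x_{i_1}\cdots x_{i_k}$ records each visited vertex with its multiplicity. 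Note that the closing vertex $v_{i_1}$ is counted once, as position $i_1$, and not again at the return.

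I see no real obstacle. The only points needing care are the ones just noted: that the weights are nonzero, so that nonzero summands correspond exactly to walks (this uses that $\ZZ[w,r]$ is a domain and the weights are monomials), and the convention that the return to the start does not contribute a second factor $x_{i_1}$.
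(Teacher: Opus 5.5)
Your proposal is correct and follows the same route as the paper: you write $(XM)_{ij}=x_i m_{ij}$, expand the $k$-fold product with indices read cyclically, and read off both homogeneity and the closed-walk interpretation from the summands. Your remark that nonzero summands are exactly the closed walks uses that edge weights are nonzero monomials in the domain $\ZZ[w,r]$. This makes explicit the converse direction that the paper's proof leaves implicit.
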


\begin{proof}
	Since $X$ is diagonal, $(XM)_{ij}=x_{i}m_{ij}$, and expanding the $k$-fold
	matrix product gives
	$$
	\tr\bigl((XM)^{k}\bigr)
	=\sum_{i_{1},\ldots,i_{k}}(XM)_{i_{1}i_{2}}(XM)_{i_{2}i_{3}}\cdots
	(XM)_{i_{k}i_{1}}
	=\sum_{i_{1},\ldots,i_{k}}\Bigl(\prod_{j=1}^{k}x_{i_{j}}\Bigr)
	\prod_{j=1}^{k}m_{i_{j}i_{j+1}},
	$$
	with indices read cyclically, $i_{k+1}=i_{1}$. A summand is nonzero only if
	$m_{i_{j}i_{j+1}}\ne 0$ for all $j$, i.e. only if consecutive vertices are
	joined by an edge of $G$. Every factor contributes exactly one variable, so
	the total degree in the $x_{i}$ equals $k$.
\end{proof}

\begin{remark}
	In the unweighted specialization $M=A(G)$, the ordinary traces appearing
	before squarefree extraction are the spectral moments of the adjacency
	matrix:
	$
	\tr(A(G)^k)=\sum_{j=1}^{n}\alpha_j^k,
	$
	where $\alpha_1,\ldots,\alpha_n$ are the adjacency eigenvalues; equivalently,
	this is the number of closed walks of length $k$
	\cite[Theorem~5.2.7]{WW19}. Thus, the additional operation in
	Theorem~\ref{thm:sq-extraction} is precisely the passage from the
	classical closed-walk moment to its simple-cycle contribution.
\end{remark}

\begin{theorem}
	\label{thm:sq-extraction}
	Let $k\ge 3$ and let $\mathcal{C}_{k}(G)$ denote the set of
	unoriented simple cycles of length $k$ in $G$. Then
	$$
	\sfpart\!\left(\tr\bigl((XM)^k\bigr)\right)
	=
	2k\sum_{C\in\mathcal{C}_{k}(G)}
	\wgt(C)\prod_{v_i\in V(C)}x_i.
	$$
	Consequently, if
	$
	C_k(M)=\sum_{C\in\mathcal{C}_{k}(G)}\wgt(C)
	$
	denotes the total weight of the simple $k$-cycles, then
	$
	C_k(M)
	=
	\frac{1}{2k}\,
	\sq\!\left(\tr\bigl((XM)^k\bigr)\right).
	$
\end{theorem}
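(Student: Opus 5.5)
The plan is to start from Lemma~\ref{lem:walk-expansion}, which writes $\tr((XM)^k)$ as a sum over index sequences $(i_1,\ldots,i_k)\in[n]^k$. Each nonzero term is a rooted, oriented closed walk of length $k$, weighted by $\wgt$ of its edges and marked by the monomial $x_{i_1}\cdots x_{i_k}$. Because $\sfpart$ is $\ZZ[w,r]$-linear and acts monomial by monomial, it suffices to decide for each summand whether its monomial is squarefree. The monomial $x_{i_1}\cdots x_{i_k}$ is squarefree exactly when the indices $i_1,\ldots,i_k$ are pairwise distinct. So $\sfpart(\tr((XM)^k))$ is the sum, over closed walks of length $k$ with $k$ distinct vertices, of the edge-weight product times $\prod_j x_{i_j}$.

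Next I will identify these walks with simple cycles. Take a closed walk $v_{i_1}\to\cdots\to v_{i_k}\to v_{i_1}$ with all $i_j$ distinct and all $m_{i_ji_{j+1}}\neq 0$. For $k\ge3$ its consecutive pairs $\{i_j,i_{j+1}\}$ (cyclically) are $k$ distinct edges of $G$. These edges form a simple cycle $C$ of length $k$ with $V(C)=\{v_{i_1},\ldots,v_{i_k}\}$, and the edge-weight product equals $\wgt(C)$ because $M$ is symmetric, so $m_{ij}=m_{ji}=\wgt(v_iv_j)$. The hypothesis $k\ge 3$ is used exactly here: for $k=2$ the walk $i_1\to i_2\to i_1$ has distinct indices but traverses a single edge twice, so it gives $\wgt(e)^2$ and is not a cycle. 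Loops do not occur since $M$ has zero diagonal.

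Conversely, I will fix $C\in\mathcal C_k(G)$ and count the sequences $(i_1,\ldots,i_k)$ that trace it. A choice of starting vertex ($k$ options) and of orientation ($2$ options) determines the sequence uniquely. These $2k$ sequences are pairwise distinct because $k\ge3$: reversing the orientation changes the successor of $i_1$, since a vertex of a cycle of length at least $3$ has two distinct neighbours on it. Each traversal contributes $\wgt(C)\prod_{v_i\in V(C)}x_i$. The map from distinct-vertex closed walks to cycles is therefore exactly $2k$-to-one, which gives the displayed identity. This bijective count is the main point requiring care, mainly the distinctness of the two orientations and the exclusion of $k=2$. The rest is bookkeeping.

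Finally, applying the evaluation $x_1=\cdots=x_n=1$ from Definition~\ref{def:sq} gives $\sq(\tr((XM)^k))=2k\,C_k(M)$. Since we work over $\ZZ[w,r]$, which is torsion-free, we may divide by $2k$ in $\mathbb{Q}[w,r]$, and the quotient lies in $\ZZ[w,r]$. This yields the stated formula for $C_k(M)$.
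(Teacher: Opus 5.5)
Your proposal is correct and follows essentially the same route as the paper. The squarefree summands in Lemma~\ref{lem:walk-expansion} are exactly the closed walks on $k$ distinct vertices. Each simple $k$-cycle is traced by exactly $2k$ such walks (choice of starting vertex and orientation), which are pairwise distinct because $k\ge3$. Evaluating at $x_1=\cdots=x_n=1$ then gives the formula for $C_k(M)$.
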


\begin{proof}
	By Lemma~\ref{lem:walk-expansion} the monomial in the $x_{i}$ attached to a
	closed walk $W=(i_{1},\ldots,i_{k})$ is $\prod_{j}x_{i_{j}}$, and this monomial
	is squarefree if and only if the indices $i_{1},\ldots,i_{k}$ are pairwise
	distinct. Hence,
	$
	\sfpart\!\left(\tr((XM)^k)\right)
	$
	is precisely the sum of the contributions of the closed $k$-walks
	visiting $k$ distinct vertices.
	
	Let $W=(i_{1},\ldots,i_{k})$ be such a walk. Its $k$ edges
	$v_{i_{1}}v_{i_{2}},\ldots,v_{i_{k}}v_{i_{1}}$ join $k$ distinct vertices in a
	closed chain, so the subgraph they span is a simple cycle $C$ of length $k$,
	and the weight recorded by $W$ is $\wgt(C)$ while the recorded monomial is
	$\prod_{v_i\in V(C)}x_i$. Conversely, a simple cycle $C\in\mathcal{C}_{k}(G)$
	arises in this way from exactly $2k$ closed walks: one chooses an initial
	vertex among the $k$ vertices of $C$ and one of the two orientations. For
	$k\ge 3$ these $2k$ walks are pairwise distinct as sequences. Indeed, two
	walks of the same orientation with distinct starting vertices differ already in
	their first entry, since the $k$ vertices of $C$ are distinct; and a walk and a
	reversed walk cannot coincide, because a common sequence would give a vertex
	$v$ of $C$ whose two neighbours along the walk, read forwards and backwards,
	agree, which for $k\ge3$ forces two distinct vertices of $C$ to be equal. This
	proves the first assertion. Applying $\sq$, equivalently evaluating the squarefree part at
	$x_{1}=\cdots=x_{n}=1$, gives $2k\,C_k(M)$, which proves the second
	assertion.
\end{proof}

\begin{corollary}
	\label{cor:cwrk-sq}
	Let $L$ be an oriented non-split alternating link, let $n_{B}$ and $n_{W}$ be the numbers of
	vertices of $G_{B}^{*}$ and $G_{W}^{*}$, and let $X_{B}$ and $X_{W}$ be
	diagonal matrices of independent vertex variables of the corresponding sizes.
	Then, for every $k\ge 3$,
	$$
	CWR_{k}(L)
	=\left(
	\frac{1}{2k}\,\sq\Bigl(\tr\bigl((X_{B}\overline A_{B})^{k}\bigr)\Bigr),\;
	\frac{1}{2k}\,\sq\Bigl(\tr\bigl((X_{W}\overline A_{W})^{k}\bigr)\Bigr)
	\right).
	$$
\end{corollary}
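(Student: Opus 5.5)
The corollary follows by applying Theorem~\ref{thm:sq-extraction} to each consolidated Tait graph separately and then matching the result with the definition of $CWR_k$. The first step is to check that the hypotheses of Theorem~\ref{thm:sq-extraction} hold for $G=G_B^*$ with $M=\overline A_B$. The consolidation of parallel edges makes $G_B^*$ a simple graph. Loops are absent because the diagram is reduced: a loop in a Tait graph would come from a nugatory crossing. Every edge weight is a monomial in $w,r$, so it lies in $\ZZ[w,r]$. The weighted adjacency matrix is symmetric with zero diagonal, exactly as in the standing assumptions of Section~\ref{sec:vertex-variables}. The same checks apply verbatim to $G_W^*$ and $\overline A_W$.

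Next, fix $k\ge3$ and apply the second assertion of Theorem~\ref{thm:sq-extraction} with $X=X_B$, an $n_B\times n_B$ diagonal matrix of independent variables. This gives
$$
\frac{1}{2k}\,\sq\Bigl(\tr\bigl((X_B\overline A_B)^k\bigr)\Bigr)=\sum_{C\in\mathcal C_k(G_B^*)}\wgt(C).
$$
For $k>2$, the right-hand side is by definition $CB_k(w,r)$, since it is the sum over unoriented simple cycles of length $k$ in $G_B^*$ of the product of the consolidated edge weights. If there is no such cycle, both sides are $0$, which agrees with the zero convention in the definition. The white coordinate is handled identically and yields $CW_k$. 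Since $CWR_k=(CB_k,CW_k)$, the displayed pair equals $CWR_k(D)$ for the chosen diagram.

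Finally, Remark~\ref{rem:vertex-order} shows that the traces do not depend on the vertex labelling, and Theorem~\ref{thm:cwr-invariance} shows that the value does not depend on the reduced alternating diagram, so the formula computes the invariant $CWR_k(L)$. The only point needing care is the division by $2k$. It is legitimate because Theorem~\ref{thm:sq-extraction} shows the squarefree part is divisible by $2k$ coefficientwise, so the right-hand side is an honest element of $\ZZ[w,r]$.
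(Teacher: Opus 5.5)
Your proposal is correct and matches the paper's proof: apply Theorem~\ref{thm:sq-extraction} to $\overline A_B$ and $\overline A_W$, identify the resulting simple-cycle weights with $CB_k$ and $CW_k$ by definition, and invoke Theorem~\ref{thm:cwr-invariance}. The extra checks you include are valid details that the paper leaves implicit: looplessness of the consolidated Tait graphs from reducedness, independence of the vertex labelling, and integrality after dividing by $2k$.
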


\begin{proof}
	Apply Theorem~\ref{thm:sq-extraction} to $M=\overline A_{B}$ and to
	$M=\overline A_{W}$, and use that $CB_{k}$ and $CW_{k}$ are by definition the
	total weights of the simple $k$-cycles of $G_{B}^{*}$ and $G_{W}^{*}$.
	Invariance is Theorem~\ref{thm:cwr-invariance}.
\end{proof}

\begin{remark}
	\label{rem:k12}
	The two exceptional indices behave as follows. For $k=1$ we get
	$\tr(XM)=0$, since $M$ has zero diagonal. For $k=2$ every closed $2$-walk
	$i\to j\to i$ already has distinct vertices, so no cancellation occurs and
	$
	\sq\Bigl(\tr\bigl((XM)^{2}\bigr)\Bigr)=\sum_{i\ne j}m_{ij}^{2}
	=2\sum_{e\in E(G)}\wgt(e)^{2}.
	$
	Thus, the case $k=2$ of the above mechanism records the \emph{squares} of the
	edge weights. Since every $\wgt(e)$ is a monomial $w^{a}r^{b}$, we have
	$\sum_{e}\wgt(e)^{2}=CB_{2}(w^{2},r^{2})$ for $G=G_{B}^{*}$, and likewise
	$\sum_{e}\wgt(e)^{2}=CW_{2}(w^{2},r^{2})$ for $G=G_{W}^{*}$. This is exactly
	the substitution occurring in the $WRP$ specialization recalled in
	Corollary~\ref{cor:wrp} below. Note also that every monomial occurring in
	$\sum_{e}\wgt(e)^{2}$ has even exponents, so the substitution
	$w^{2}\mapsto w$, $r^{2}\mapsto r$ is well defined on this polynomial and
	returns $CB_{2}(w,r)$. Hence, the $t^{2}$ coefficient of the generating
	polynomial of Theorem~\ref{thm:logdet-main} determines the second component
	$CB_{2}(w,r)$ itself, and not merely the substituted polynomial
	$CB_{2}(w^{2},r^{2})$.
\end{remark}

\section{The log-determinant generating function}
\label{sec:logdet}

We now assemble the polynomials of Theorem~\ref{thm:sq-extraction} for all $k$
into a single generating function. Two closely related constructions should be
kept in mind. In Sinclair's squarefree algebra, for $f_M(z)=\det(I+D(z)M)$ the
coefficients of $\log f_M$ are cyclic products indexed by vertex subsets
\cite[Theorem~6.3]{Sin26}. In the Hopf algebra of self-avoiding hikes of
Giscard, Rochet and Wilson, the logarithm of $\det(I-W)$ taken with respect to
the induced-subgraph convolution is exactly the formal series of simple cycles
\cite[Theorem~4.2]{GRW18}. Our formulation below introduces one additional
length variable $t$, specializes to symmetric weighted adjacency matrices, and
aggregates the multilinear coefficients by cardinality so that they are exactly
the components needed for $CWR$; the logarithm is the ordinary formal logarithm, followed by squarefree
extraction, and not a convolution logarithm.
Because the formal logarithm contains the coefficients $1/j$, the natural
ambient ring in this section is
$\mathbb{Q}[w,r][x_{1},\ldots,x_{n}][[t]]$. The final squarefree generating
polynomial will nevertheless have coefficients in $\ZZ[w,r]$.

\begin{definition}
	\label{def:lambda}
	For $M$ as above set
	$
	\Lambda_{M}(t,x)=-\log\det\bigl(I-tXM\bigr),
	$
	where $I$ is the identity matrix of size $n$ and $\log(1+u)=\sum_{j\ge
		1}(-1)^{j+1}u^{j}/j$. The definition makes sense because
	$\det(I-tXM)\in 1+t\,\ZZ[w,r][x][t]$, so that its logarithm is a well-defined
	element of $t\,\mathbb{Q}[w,r][x][[t]]$.
\end{definition}

\begin{lemma}
	\label{lem:logdet-trace}
	In $\mathbb{Q}[w,r][x][[t]]$ we have
	$$
	\Lambda_{M}(t,x)=\sum_{k\ge 1}\frac{t^{k}}{k}\,\tr\bigl((XM)^{k}\bigr).
	$$
\end{lemma}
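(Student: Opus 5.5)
We will prove the identity $-\log\det(I-N)=\sum_{k\ge1}\tr(N^k)/k$ for a matrix $N=tXM$ whose entries lie in the ideal $t\,R[[t]]$, where $R=\mathbb{Q}[w,r][x_1,\ldots,x_n]$. Once this is done, $\tr(N^k)=t^k\tr((XM)^k)$ gives the statement. Because $t$ is central and $N\equiv0\pmod t$, the series $\sum_k N^k/k$ converges $t$-adically in $\mathrm{Mat}_n(R[[t]])$. Both sides of the claimed identity therefore lie in $t\,R[[t]]$, and it suffices to compare them as formal power series in $t$.

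The cleanest route is via logarithmic derivatives with respect to $t$. Both sides vanish at $t=0$, so it is enough to show that their $t$-derivatives agree; this uses that $R$ is a $\mathbb{Q}$-algebra, so that a series in $t\,R[[t]]$ is determined by its derivative. For the right-hand side, differentiating term by term gives
$$
\sum_{k\ge1}t^{k-1}\tr\bigl((XM)^k\bigr)=\tr\Bigl(XM\,(I-tXM)^{-1}\Bigr).
$$
Here $(I-tXM)^{-1}=\sum_{j\ge0}t^j(XM)^j$ exists in $\mathrm{Mat}_n(R[[t]])$.

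For the left-hand side, put $f(t)=\det(I-tXM)\in 1+tR[t]$. Then $\frac{d}{dt}\bigl(-\log f\bigr)=-f'/f$, which is the chain rule for the formal logarithm: it follows from $\frac{d}{du}\log(1+u)=(1+u)^{-1}$ applied to $u=f-1\in tR[[t]]$. Jacobi's formula $f'(t)=\tr\bigl(\operatorname{adj}(I-tXM)\cdot(-XM)\bigr)$ then yields $-f'/f=\tr\bigl((I-tXM)^{-1}XM\bigr)$, which equals the expression above by cyclicity of the trace.

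Jacobi's formula holds over any commutative ring, since it is a polynomial identity: expand the determinant multilinearly in the rows and differentiate by the product rule. This removes the need for eigenvalues.

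The main obstacle is rigour about the formal setting, not the computation. One has to check two things: that the derivative and chain rule for $\log(1+u)$ are valid for formal power series with coefficients in a polynomial ring, and that the $t$-adic limits may be exchanged with the trace. Both follow because only finitely many terms contribute to each coefficient of $t^m$.

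An alternative we would keep in reserve avoids derivatives entirely. Pass to an algebraic closure of the fraction field of $R$, factor $\det(I-tXM)=\prod_i(1-\mu_i t)$ with $\mu_i$ the eigenvalues of $XM$, and use $\log\prod=\sum\log$ together with $\tr((XM)^k)=\sum_i\mu_i^k$. This route requires justifying the multiplicativity of the formal logarithm, which is why we prefer the derivative argument.
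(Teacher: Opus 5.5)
Your proposal is correct and follows essentially the same route as the paper's proof: Jacobi's formula for $\frac{d}{dt}\log\det(I-tXM)$, the inverse $(I-tXM)^{-1}=\sum_{j\ge0}t^{j}(XM)^{j}$ in $R[[t]]$, and the observation that two series in $tR[[t]]$ over a $\mathbb{Q}$-algebra with equal derivatives and zero constant term coincide. Your extra justifications (the formal chain rule for $\log(1+u)$, and Jacobi's formula via the adjugate over an arbitrary commutative ring) only spell out what the paper uses implicitly.
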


\begin{proof}
	Write $N=XM$ and $f(t)=\det(I-tN)$. Since $f(0)=1$, both $f(t)$ and
	$I-tN$ are invertible as formal power series, with
	$(I-tN)^{-1}=\sum_{j\ge 0}t^{j}N^{j}$. We work over the
	$\mathbb{Q}$-algebra $\mathbb{Q}[w,r,x_1,\ldots,x_n][[t]]$. Jacobi's formula
	gives
	\begin{align*}
		\frac{d}{dt}\log\det(I-tN)
		&=\tr\Bigl((I-tN)^{-1}\frac{d}{dt}(I-tN)\Bigr)
		=-\tr\Bigl((I-tN)^{-1}N\Bigr)\\
		&=-\sum_{j\ge 0}t^{j}\tr\bigl(N^{j+1}\bigr).
	\end{align*}
	Hence, $\frac{d}{dt}\Lambda_{M}=\sum_{k\ge1}t^{k-1}\tr(N^{k})$. The series
	$\sum_{k\ge1}t^k\tr(N^k)/k$ has the same derivative and the same zero constant
	term, so the two series are equal.
\end{proof}

Extend $\sq$ first $\mathbb{Q}[w,r]$-linearly to
$\mathbb{Q}[w,r][x_1,\ldots,x_n]$, and then coefficientwise to power series in
$t$, that is,
$\sq\bigl(\sum_{k}P_{k}t^{k}\bigr)=\sum_{k}\sq(P_{k})\,t^{k}$.

\begin{theorem}
	\label{thm:logdet-main}
	Let $M$ be the weighted adjacency matrix of a finite simple loopless graph $G$
	on $n$ vertices, and let $C_{k}(M)$ be the total weight of the unoriented
	simple $k$-cycles of $G$. Then
	$$
	\sq\bigl(\Lambda_{M}(t,x)\bigr)
	=\sq\Bigl(-\log\det\bigl(I-tXM\bigr)\Bigr)
	=\Bigl(\sum_{e\in E(G)}\wgt(e)^{2}\Bigr)t^{2}
	+2\sum_{k\ge 3}C_{k}(M)\,t^{k}.
	$$
	In particular the coefficient of $t^{1}$ vanishes, which is why the right-hand
	side begins in degree $2$; and the right-hand side is a polynomial in $t$ of
	degree at most $n$.
\end{theorem}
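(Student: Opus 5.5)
The plan is to expand the log-determinant by Lemma~\ref{lem:logdet-trace} and then apply the squarefree extraction coefficientwise in $t$. By definition, $\sq$ acts on a power series in $t$ one coefficient at a time, so
$$
\sq\bigl(\Lambda_M(t,x)\bigr)=\sum_{k\ge1}\frac{t^k}{k}\,\sq\bigl(\tr((XM)^k)\bigr).
$$
The division by $k$ is harmless here, because $\sq$ is $\mathbb{Q}[w,r]$-linear. The proof then splits into the three ranges $k=1$, $k=2$ and $k\ge3$, handled as follows.

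For $k=1$, Remark~\ref{rem:k12} gives $\tr(XM)=\sum_i x_i m_{ii}=0$, since $M$ has zero diagonal. Hence the $t^1$ coefficient vanishes.

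For $k=2$, I will use Lemma~\ref{lem:walk-expansion}:
$$
\tr((XM)^2)=\sum_{i,j}x_ix_j\,m_{ij}m_{ji}.
$$
The terms with $i=j$ vanish, and every term with $i\ne j$ is already squarefree. So $\sq(\tr((XM)^2))=\sum_{i\ne j}m_{ij}^2=2\sum_{e\in E(G)}\wgt(e)^2$, using the symmetry of $M$ and the fact that each edge arises from two ordered pairs. Dividing by $k=2$ gives the stated $t^2$ coefficient $\sum_e\wgt(e)^2$.

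For $k\ge3$, Theorem~\ref{thm:sq-extraction} gives $\sq(\tr((XM)^k))=2k\,C_k(M)$. Multiplying by the factor $1/k$ from the logarithm leaves $2C_k(M)\,t^k$. Summing over the three ranges yields the displayed identity. Since $\wgt(e)^2$ and $C_k(M)$ lie in $\ZZ[w,r]$, this also confirms that the final series has coefficients in $\ZZ[w,r]$, even though the intermediate computation takes place over $\mathbb{Q}$.

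For the polynomiality claim, I will argue with the degree in the $x_i$. By Lemma~\ref{lem:walk-expansion}, $\tr((XM)^k)$ is homogeneous of degree $k$ in $x_1,\ldots,x_n$. A squarefree monomial in $n$ variables has degree at most $n$, so $\sq(\tr((XM)^k))=0$ whenever $k>n$. Equivalently, there is no simple cycle of length greater than $n$. The series is therefore a polynomial in $t$ of degree at most $n$.

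There is no serious obstacle. The only points needing care are the following.
\begin{itemize}
\item Extending $\sq$ coefficientwise to $\mathbb{Q}[w,r][x][[t]]$ must commute with the infinite sum. This holds because each $t$-coefficient involves only one trace.
\item The $k=2$ term must be handled separately. Theorem~\ref{thm:sq-extraction} is stated only for $k\ge3$, and for $k=2$ the walk-to-cycle correspondence has multiplicity $2$ per edge rather than $2k=4$.
\end{itemize}
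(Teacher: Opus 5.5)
Your proposal is correct and follows the paper's proof essentially step for step. You use Lemma~\ref{lem:logdet-trace} to identify the $t^k$ coefficient as $\tr((XM)^k)/k$, treat $k=1$, $k=2$ (as in Remark~\ref{rem:k12}) and $k\ge3$ (via Theorem~\ref{thm:sq-extraction}) separately, and obtain polynomiality in $t$ from the homogeneity of $\tr((XM)^k)$ of degree $k$ in the $x_i$.
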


\begin{proof}
	By Lemma~\ref{lem:logdet-trace} the coefficient of $t^{k}$ in
	$\Lambda_{M}(t,x)$ is $\tr((XM)^{k})/k$, so
	$
	\bigl[t^{k}\bigr]\,\sq\bigl(\Lambda_{M}\bigr)
	=\frac{1}{k}\,\sq\Bigl(\tr\bigl((XM)^{k}\bigr)\Bigr).
	$
	For $k=1$ this vanishes and for $k=2$ it equals $\sum_{e}\wgt(e)^{2}$ by
	Remark~\ref{rem:k12}. For $k\ge3$, Theorem~\ref{thm:sq-extraction} gives
	$\sq(\tr((XM)^{k}))=2k\,C_{k}(M)$, hence the coefficient of $t^{k}$ is
	$2C_{k}(M)$. Finally, $\tr((XM)^{k})$ is homogeneous of degree $k$ in the
	variables $x_{1},\ldots,x_{n}$ by Lemma~\ref{lem:walk-expansion}, while a
	squarefree monomial has degree at most $n$; therefore
	$\sq(\tr((XM)^{k}))=0$ for $k>n$.
\end{proof}

\begin{remark}
	\label{rem:hopf-comparison}
	The comparison with \cite[Theorem~4.2]{GRW18} is as follows. There one works
	in the trace monoid of hikes on a digraph, with a formal variable
	$\omega_{ij}$ attached to every directed edge, and the relevant logarithm
	$\log_{*}$ is taken with respect to the induced-subgraph convolution
	$(\varphi*\psi)[G]=\sum_{H\prec G}\varphi[H]\psi[G-H]$; the statement
	$-\log_{*}\det(I-W)=\Pi[G]$ then says that the simple cycles are the
	irreducible elements of a cocommutative Hopf algebra of self-avoiding hikes.
	In Theorem~\ref{thm:logdet-main} the logarithm is the ordinary formal
	logarithm, the self-avoidance is enforced by the vertex variables and the
	linear operator $\sq$ rather than by a convolution, and the extra variable $t$
	grades the output by cycle length, which is what makes the coefficients of the
	resulting polynomial the individual components $CB_{k}$ and $CW_{k}$ rather
	than an undifferentiated series. We therefore regard
	Theorem~\ref{thm:logdet-main} as a length-marked, weighted, Tait-graph form of
	a statement whose combinatorial content is already in \cite{GRW18} and, in the
	squarefree-algebra formulation closest to ours, in \cite[Theorem~6.3]{Sin26}.
\end{remark}

\begin{corollary}
	\label{cor:cwr-logdet}
	For an oriented non-split alternating link $L$ and every $k\ge 3$,
	$$
	CWR_{k}(L)=
	\frac{1}{2}
	\Bigl(
	\bigl[t^{k}\bigr]\,\sq\bigl(-\log\det(I-tX_{B}\overline A_{B})\bigr),
	\;
	\bigl[t^{k}\bigr]\,\sq\bigl(-\log\det(I-tX_{W}\overline A_{W})\bigr)
	\Bigr),
	$$
	with $X_{B}$, $X_{W}$ as in Corollary~\ref{cor:cwrk-sq}. Moreover the pair of
	polynomials
	$\sq(-\log\det(I-tX_{B}\overline A_{B}))$ and
	$\sq(-\log\det(I-tX_{W}\overline A_{W}))$ determines the whole invariant
	$CWR(L)$: the components with $k\ge3$ are read off as above, and $CWR_{2}$ is
	recovered from the two coefficients of $t^{2}$ by the substitution
	$w^{2}\mapsto w$, $r^{2}\mapsto r$ of Remark~\ref{rem:k12}.
\end{corollary}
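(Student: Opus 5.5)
The corollary follows by specializing Theorem~\ref{thm:logdet-main} to the two consolidated Tait graphs and then invoking Theorem~\ref{thm:cwr-invariance}. First apply Theorem~\ref{thm:logdet-main} with $M=\overline A_{B}$, which is the weighted adjacency matrix of the finite simple loopless graph $G_{B}^{*}$. For every $k\ge 3$, the coefficient of $t^{k}$ in $\sq(-\log\det(I-tX_{B}\overline A_{B}))$ is then $2C_{k}(\overline A_{B})$. By the definition of $CB_{k}$ as the total weight of the unoriented simple $k$-cycles of $G_{B}^{*}$, this coefficient equals $2CB_{k}(w,r)$. Dividing by $2$ gives the first coordinate. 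The same argument with $M=\overline A_{W}$ and $X_{W}$ gives $CW_{k}$.

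Independence of the chosen reduced alternating diagram is not part of the graph statement; it is supplied by Theorem~\ref{thm:cwr-invariance}, exactly as in Corollary~\ref{cor:cwrk-sq}. Alternatively, the first display could be deduced directly from Corollary~\ref{cor:cwrk-sq} together with Lemma~\ref{lem:logdet-trace}. The operator $\sq$ acts coefficientwise in $t$, so
$$[t^{k}]\,\sq(\Lambda_{M})=\frac1k\,\sq\bigl(\tr((XM)^{k})\bigr),$$
and this equals $2CB_{k}$ by Corollary~\ref{cor:cwrk-sq}. I would mention this as a consistency check.

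For the ``moreover'' part, the components with $k\ge3$ are handled by the first part. Trailing zeros are consistent with this: by Theorem~\ref{thm:logdet-main} the generating polynomial has degree at most $n_{B}$ (respectively $n_{W}$), so all higher coefficients vanish, as the convention omitting trailing $(0,0)$ requires. For $CWR_{2}$, Theorem~\ref{thm:logdet-main} gives the $t^{2}$ coefficient as $\sum_{e}\wgt(e)^{2}$, which by Remark~\ref{rem:k12} equals $CB_{2}(w^{2},r^{2})$.

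The only point needing care is that the substitution $w^{2}\mapsto w$, $r^{2}\mapsto r$ is well defined and inverts $P(w,r)\mapsto P(w^{2},r^{2})$. Every edge weight of $G_{B}^{*}$ is a monomial $w^{a}r^{b}$ with nonnegative integer coefficient one. Hence the polynomial $\sum_{e}\wgt(e)^{2}$ involves only even exponents, and the map $P(w,r)\mapsto P(w^{2},r^{2})$ is injective on $\ZZ[w,r]$ with the described inverse on its image. I do not expect a genuine obstacle; this injectivity remark is the one step worth stating explicitly.
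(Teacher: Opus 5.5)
Your proposal is correct and matches the paper's proof. It applies Theorem~\ref{thm:logdet-main} to $\overline A_{B}$ and $\overline A_{W}$ to read off $2CB_{k}$ and $2CW_{k}$ as the $t^{k}$ coefficients, and uses Remark~\ref{rem:k12} (including the well-definedness of $w^{2}\mapsto w$, $r^{2}\mapsto r$ on even-exponent monomials) to recover $CWR_{2}$; your remarks on invariance via Theorem~\ref{thm:cwr-invariance} and on the degree bound are harmless elaborations of what the paper calls immediate.
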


\begin{proof}
	Immediate from Theorem~\ref{thm:logdet-main} applied to $\overline A_{B}$ and
	$\overline A_{W}$, whose simple $k$-cycle weights are $CB_{k}$ and $CW_{k}$,
	together with Remark~\ref{rem:k12} for the last assertion.
\end{proof}

\begin{corollary}[$WRP$ as the value at $t=1$]
	\label{cor:wrp}
	With the notation of \cite{JabWRP24} and \cite[Sec.~3.1]{JabCWR24},
	$
	\sq\Bigl(-\log\det\bigl(I-tX_{B}\overline A_{B}\bigr)\Bigr)\Big|_{t=1}
	=CB_{2}(w^{2},r^{2})+2\sum_{i>2}CB_{i}(w,r),
	$
	and the same identity holds with $B$ replaced by $W$. Hence, the unordered
	pair of these two evaluations is exactly the invariant $WRP(w,r)$ introduced
	in \cite{JabWRP24}.
\end{corollary}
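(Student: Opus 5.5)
The plan is to evaluate the identity of Theorem~\ref{thm:logdet-main} at $t=1$, specialized to $M=\overline A_{B}$ and then to $M=\overline A_{W}$. Before setting $t=1$, I need the squarefree series to be an honest polynomial in $t$. This is exactly the final clause of Theorem~\ref{thm:logdet-main}: $\sq(\tr((XM)^k))=0$ for $k>n$, since each $\tr((XM)^k)$ is homogeneous of degree $k$ in the vertex variables. Hence $\sq(-\log\det(I-tX_B\overline A_B))$ lies in $\mathbb{Q}[w,r][t]$, and in fact in $\ZZ[w,r][t]$. The substitution $t=1$ is therefore a legitimate evaluation rather than a formal one.

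The evaluation then gives
\[
\Bigl(\sum_{e\in E(G_B^*)}\wgt(e)^{2}\Bigr)+2\sum_{k\ge3}C_k(\overline A_B).
\]
By definition $C_k(\overline A_B)=CB_k(w,r)$ for $k\ge3$. Remark~\ref{rem:k12} identifies the first sum: each $\wgt(e)$ is a monomial $w^ar^b$, so $\wgt(e)^2=(w^2)^a(r^2)^b$, and therefore the sum equals $CB_2(w^2,r^2)$. This yields the displayed identity for $B$. The same argument with $G_W^*$ and $\overline A_W$ gives the identity for $W$.

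The last assertion, that the unordered pair of these two evaluations is $WRP(w,r)$, follows by comparing with \eqref{eq:wrp-cwr}. That equation expresses the two coordinates of $WRP(L)$ as exactly $CB_2(w^2,r^2)+2\sum_{k\ge3}CB_k$ and its $W$-analogue, regarded as an unordered pair. Its justification was recorded in Section~\ref{sec:cwr-background}. A doubled edge contributes a directed $2$-cycle of weight $\wgt(e)^2$, while each unoriented simple $k$-cycle with $k\ge3$ contributes two directed cycles, one for each orientation, each of weight $\wgt(C)$.

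The only point requiring care is the role of $t=1$, and it is the step I expect to be the main obstacle. The formal logarithm $-\log\det(I-tXM)$ itself is a genuine power series, not a polynomial. Its value at $t=1$ is meaningless without first applying $\sq$. So the argument must stress that $\sq$ is applied coefficientwise first, and only then is the resulting polynomial of degree at most $n$ evaluated. Nothing else is substantive: independence of the chosen diagram and of the vertex ordering is not needed for the identity itself, and for the invariant interpretation it follows from Theorem~\ref{thm:cwr-invariance} and Remark~\ref{rem:vertex-order}.
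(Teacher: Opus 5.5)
Your proposal is correct and follows the paper's proof essentially step for step. Polynomiality from Theorem~\ref{thm:logdet-main} legitimizes $t=1$, Remark~\ref{rem:k12} turns the $t^{2}$ coefficient into $CB_{2}(w^{2},r^{2})$, and comparison with Equation~\eqref{eq:wrp-cwr} identifies the unordered pair with $WRP(w,r)$.
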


\begin{proof}
	By Theorem~\ref{thm:logdet-main} the left-hand side is a polynomial in $t$, so
	the substitution $t=1$ is legitimate and produces
	$\sum_{e}\wgt(e)^{2}+2\sum_{k>2}CB_{k}(w,r)$. Every edge weight of
	$G_{B}^{*}$ is a monomial $w^{a}r^{b}$, so
	$\sum_{e}\wgt(e)^{2}=CB_{2}(w^{2},r^{2})$ as in Remark~\ref{rem:k12}. This is
	precisely the black coordinate of
	$CWR_{2}(w^{2},r^{2})+2\sum_{i>2}CWR_{i}(w,r)$, and the white coordinate is
	obtained in the same way. By Equation~\eqref{eq:wrp-cwr}, which is the
	directed-cycle definition of \cite{JabWRP24} rewritten in terms of $CWR$, the
	unordered pair of these two coordinates is $WRP(w,r)$.
\end{proof}

\begin{remark}
	\label{rem:role-of-x}
	Setting $x_{1}=\cdots=x_{n}=1$ \emph{before} extracting the squarefree part
	gives the classical trace--determinant generating series
	$$
	-\log\det(I-tM)
	=
	\sum_{k\ge1}\frac{t^{k}}{k}\tr(M^{k}),
	$$
	which records all closed walks; compare
	\cite[Proposition~V.6 and Notes~V.26, V.28]{FS09}.
	The closed $k$-walks are recorded with coefficient $1/k$.
	The vertex variables and the operator $\sq$ are exactly what removes the
	walks that revisit a vertex, so that only the simple cycles, i.e.\ the cycles
	appearing in the definition of $CWR$, survive.
	
	In this sense Theorem~\ref{thm:logdet-main} is the generating-function form of
	the inclusion--exclusion that is carried out by hand for $k=4$ and $k=5$ in
	Section~\ref{sec:low-order}.
\end{remark}

\section{An inclusion--exclusion form of the extraction}
\label{sec:inclusion-exclusion}

Theorem~\ref{thm:logdet-main} is stated in terms of the operator $\sq$; we now
make that operator explicit by a M\"obius inversion over the Boolean lattice of
vertex subsets. This yields a formula for $C_{k}(M)$ involving only ordinary
traces (equivalently, only determinants) of principal submatrices.

\begin{definition}
	For $S\subseteq[n]$ let $M[S]=(m_{ij})_{i,j\in S}$ be the principal submatrix
	of $M$ on the rows and columns indexed by $S$; it is the weighted adjacency
	matrix of the induced subgraph $G[S]$. For $S=\varnothing$ we use the
	convention $\tr(M[S]^k)=0$ for $k\ge1$. Let
	$\onev_{S}\in\{0,1\}^{n}$ be the indicator vector of $S$.
\end{definition}

\begin{lemma}
	\label{lem:restriction}
	For every $S\subseteq[n]$ and every $k\ge1$,
	$
	\tr\bigl((XM)^{k}\bigr)\Big|_{x=\onev_{S}}=\tr\bigl(M[S]^{k}\bigr).
	$
\end{lemma}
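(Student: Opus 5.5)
The plan is to derive the identity from the walk expansion of Lemma~\ref{lem:walk-expansion} by evaluating both sides term by term. Substituting $x=\onev_{S}$ means setting $x_i=1$ for $i\in S$ and $x_i=0$ for $i\notin S$. In the expansion
$$
\tr\bigl((XM)^{k}\bigr)=\sum_{(i_1,\ldots,i_k)\in[n]^k}x_{i_1}\cdots x_{i_k}\,m_{i_1i_2}\cdots m_{i_ki_1},
$$
the factor $x_{i_1}\cdots x_{i_k}$ then evaluates to $1$ if every index $i_j$ lies in $S$, and to $0$ otherwise.

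Hence, after the substitution, only the sequences $(i_1,\ldots,i_k)\in S^k$ survive, and each contributes the cyclic product $m_{i_1i_2}\cdots m_{i_ki_1}$. The next step is to expand the right-hand side directly. The entries of $M[S]$ are $m_{ij}$ for $i,j\in S$, so the usual trace expansion of the $k$-fold product gives
$$
\tr\bigl(M[S]^k\bigr)=\sum_{(i_1,\ldots,i_k)\in S^k}m_{i_1i_2}\cdots m_{i_ki_1}.
$$
The two sums agree term by term, which proves the lemma for nonempty $S$.

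A slicker alternative is matrix-level. With the substitution, $X$ becomes the diagonal projection $P_S$, and $(P_SM)^k$ vanishes outside the rows indexed by $S$. Using cyclicity and $P_S^2=P_S$, one has $\tr((P_SM)^k)=\tr((P_SMP_S)^k)$, and $P_SMP_S$ is $M[S]$ padded by zeros. I would present the index computation, since it is the most transparent, and perhaps mention this reformulation.

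The only point needing care is the degenerate case $S=\varnothing$. There every monomial has positive degree $k\ge1$ and so evaluates to $0$, which matches the convention $\tr(M[\varnothing]^k)=0$. Beyond this bookkeeping I expect no real obstacle.
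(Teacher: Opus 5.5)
Your proposal is correct and follows essentially the same route as the paper. Both arguments use the walk expansion of Lemma~\ref{lem:walk-expansion}, note that the substitution $x=\onev_{S}$ kills exactly the closed walks leaving $S$, and identify the surviving sum with the trace expansion of $\tr\bigl(M[S]^{k}\bigr)$; your explicit check of $S=\varnothing$ and the projection reformulation are valid but optional.
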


\begin{proof}
	By Lemma~\ref{lem:walk-expansion} the substitution $x_{i}=1$ for $i\in S$ and
	$x_{i}=0$ otherwise kills exactly the closed walks visiting a vertex outside
	$S$ and leaves the remaining ones unchanged. The surviving walks are the
	closed $k$-walks of $G[S]$, whose generating sum is $\tr(M[S]^{k})$.
\end{proof}

\begin{lemma}
	\label{lem:mobius}
	Let $P\in\ZZ[w,r][x_{1},\ldots,x_{n}]$ be homogeneous of degree $k$ in
	$x_{1},\ldots,x_{n}$, and let $T\subseteq[n]$ with $|T|=k$. Then
	$
	\Bigl[\prod_{i\in T}x_{i}\Bigr]P
	=\sum_{S\subseteq T}(-1)^{k-|S|}\,P(\onev_{S}).
	$
\end{lemma}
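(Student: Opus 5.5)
The plan is to expand $P$ in monomials and reduce the claim to a statement about a single monomial. Write
$
P=\sum_{|\alpha|=k}c_{\alpha}x^{\alpha}.
$
Both sides of the identity are $\ZZ[w,r]$-linear in $P$: the left side is a coefficient extraction, and the right side is a signed sum of evaluations. It therefore suffices to treat $P=x^{\alpha}$ with $|\alpha|=k$. In that case we must show that $\sum_{S\subseteq T}(-1)^{k-|S|}(\onev_S)^{\alpha}$ equals $1$ when $\alpha=\onev_T$ and equals $0$ otherwise.

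The key step is to evaluate the monomial at an indicator vector. We have $(\onev_S)^{\alpha}=1$ exactly when $\supp(\alpha)\subseteq S$, and $(\onev_S)^{\alpha}=0$ otherwise; here we use $0^{0}=1$ for coordinates with $\alpha_i=0$. Put $U=\supp(\alpha)$. If $U\not\subseteq T$, every term vanishes and the sum is $0$. If $U\subseteq T$, the sum becomes
$\sum_{U\subseteq S\subseteq T}(-1)^{k-|S|}=(-1)^{k-|T|}\sum_{R\subseteq T\setminus U}(-1)^{|T\setminus U|-|R|}$.
Since $|T|=k$, this equals $(1-1)^{|T\setminus U|}$ up to sign. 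It is therefore $0$ unless $U=T$, and equals $1$ when $U=T$.

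Homogeneity enters at the final step. If $U=T$, then $\alpha$ is supported on exactly the $k$ elements of $T$, with every $\alpha_i\ge1$, and $|\alpha|=k$. This forces $\alpha_i=1$ on $T$, that is, $x^{\alpha}=\prod_{i\in T}x_i$. So the right-hand side picks out exactly the coefficient of $\prod_{i\in T}x_i$.

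This step is the only delicate point, and it is where the hypothesis $|T|=\deg P$ is essential. Without homogeneity, a monomial such as $x_1^2x_2$ would have the same support as $x_1x_2$ and would also survive the alternating sum. I expect no other obstacle; the rest is the standard Boolean Möbius-inversion identity.
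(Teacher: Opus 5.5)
Your proposal is correct and follows the paper's proof essentially step for step. Both expand $P$ into monomials, use that a monomial evaluated at $\onev_S$ equals $1$ exactly when its support lies in $S$, collapse the alternating sum to $(1-1)^{|T|-|\supp(\mu)|}$, and then use homogeneity of degree $k=|T|$ to conclude that the only surviving monomial is $\prod_{i\in T}x_i$ (your ``up to sign'' is unnecessary since $(-1)^{k-|T|}=1$, but harmless).
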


\begin{proof}
	Write $P=\sum_{\mu}c_{\mu}\mu$ over monomials $\mu$ in the $x_{i}$, and let
	$\supp(\mu)$ be the set of indices occurring in $\mu$. Then
	$\mu(\onev_{S})=1$ if $\supp(\mu)\subseteq S$ and $\mu(\onev_{S})=0$
	otherwise, so
	$$
	\sum_{S\subseteq T}(-1)^{|T|-|S|}P(\onev_{S})
	=\sum_{\mu}c_{\mu}\!\!\sum_{\supp(\mu)\subseteq S\subseteq T}\!\!(-1)^{|T|-|S|},
	$$
	where $\mu$ runs over the monomials with $\supp(\mu)\subseteq T$. The inner
	sum equals $\sum_{j=0}^{d}\binom{d}{j}(-1)^{d-j}=0$ with
	$d=|T|-|\supp(\mu)|$, unless $d=0$, i.e. unless $\supp(\mu)=T$, in which case
	it equals $1$. Since $P$ is homogeneous of degree $k=|T|$, the only monomial
	with support equal to $T$ is $\prod_{i\in T}x_{i}$, and the claim follows.
\end{proof}

\begin{theorem}
	\label{thm:incl-excl}
	Let $M$ be as above and let $k\ge3$. Then
	$$
	C_{k}(M)
	=\frac{1}{2k}\sum_{\substack{T\subseteq[n]\\ |T|=k}}\ \sum_{S\subseteq T}
	(-1)^{k-|S|}\tr\bigl(M[S]^{k}\bigr)
	=\frac{1}{2k}\sum_{\substack{S\subseteq[n]\\ |S|\le k}}
	(-1)^{k-|S|}\binom{n-|S|}{k-|S|}\tr\bigl(M[S]^{k}\bigr),
	$$
	with the convention $\binom{a}{b}=0$ for $b<0$ or $b>a$. Consequently, for an oriented non-split alternating link $L$ and $k\ge3$,
	$
	CWR_{k}(L)=\bigl(C_{k}(\overline A_{B}),\,C_{k}(\overline A_{W})\bigr)
	$
	with $C_{k}$ given by the above expression.
\end{theorem}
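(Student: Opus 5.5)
The plan is to combine Theorem~\ref{thm:sq-extraction} with Lemmas~\ref{lem:restriction} and~\ref{lem:mobius}, and then regroup the resulting double sum. First, put $P=\tr((XM)^{k})$. By Lemma~\ref{lem:walk-expansion}, $P$ is homogeneous of degree $k$ in $x_1,\ldots,x_n$. Its squarefree part therefore contains only monomials $\prod_{i\in T}x_i$ with $|T|=k$. Definition~\ref{def:sq} then gives
$$
\sq(P)=\sum_{\substack{T\subseteq[n]\\|T|=k}}\Bigl[\prod_{i\in T}x_i\Bigr]P,
$$
and Theorem~\ref{thm:sq-extraction} gives $C_k(M)=\sq(P)/(2k)$.

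Second, each coefficient is rewritten by Lemma~\ref{lem:mobius}, which applies because $P$ is homogeneous of degree $k=|T|$:
$$
\Bigl[\prod_{i\in T}x_i\Bigr]P=\sum_{S\subseteq T}(-1)^{k-|S|}P(\onev_S).
$$
By Lemma~\ref{lem:restriction}, $P(\onev_S)=\tr(M[S]^k)$. For $S=\varnothing$ the value is $P(0)=0$, since $P$ has no constant term. This agrees with the convention $\tr(M[\varnothing]^k)=0$. Together these steps give the first displayed equality.

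Third, the double sum is regrouped by interchanging the order of summation. A fixed $S\subseteq[n]$ with $|S|\le k$ appears once for each $T$ with $S\subseteq T\subseteq[n]$ and $|T|=k$. There are $\binom{n-|S|}{k-|S|}$ such $T$, since one chooses the $k-|S|$ additional elements from the $n-|S|$ indices outside $S$. The sign $(-1)^{k-|S|}$ does not depend on $T$. Sets with $|S|>k$ never occur, and for $k>n$ both sides vanish under the stated binomial convention. This gives the second equality.

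Finally, the link statement follows by applying the formula to $M=\overline A_B$ and $M=\overline A_W$ and invoking Corollary~\ref{cor:cwrk-sq}. None of these steps is a real obstacle. The only care needed is in the homogeneity bookkeeping: Lemma~\ref{lem:mobius} requires $|T|=k$, which is exactly why non-squarefree monomials and supports of the wrong size drop out. One should also check the empty-set and $k>n$ conventions.
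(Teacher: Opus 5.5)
Your proposal is correct and follows the same route as the paper's proof. Both use homogeneity to restrict $\sq(P)$ to the coefficients of $\prod_{i\in T}x_i$ with $|T|=k$, then apply Lemma~\ref{lem:mobius}, Lemma~\ref{lem:restriction} and Theorem~\ref{thm:sq-extraction}, and finally interchange the order of summation with the count $\binom{n-|S|}{k-|S|}$; your checks of the empty-set and $k>n$ conventions are sound additions.
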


\begin{proof}
	By Lemma~\ref{lem:walk-expansion}, $P=\tr((XM)^{k})$ is homogeneous of degree
	$k$, so by Definition~\ref{def:sq} only the squarefree monomials of degree $k$
	contribute to $\sq(P)$, that is
	$$
	\sq(P)=\sum_{\substack{T\subseteq[n]\\|T|=k}}\Bigl[\prod_{i\in T}x_{i}\Bigr]P .
	$$
	Lemma~\ref{lem:mobius} rewrites each inner coefficient as
	$\sum_{S\subseteq T}(-1)^{k-|S|}P(\onev_{S})$, and
	Lemma~\ref{lem:restriction} identifies $P(\onev_{S})$ with $\tr(M[S]^{k})$.
	Theorem~\ref{thm:sq-extraction} then gives the first equality. For the second,
	exchange the order of summation: a fixed $S$ with $|S|=j$ is contained in
	exactly $\binom{n-j}{k-j}$ subsets $T\subseteq[n]$ of cardinality $k$.
\end{proof}

\begin{corollary}
	\label{cor:incl-excl-det}
	For $k\ge3$,
	$$
	C_{k}(M)=\frac{1}{2}\sum_{\substack{S\subseteq[n]\\ |S|\le k}}
	(-1)^{k-|S|}\binom{n-|S|}{k-|S|}
	\bigl[t^{k}\bigr]\Bigl(-\log\det\bigl(I-tM[S]\bigr)\Bigr).
	$$
\end{corollary}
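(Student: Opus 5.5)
This is a direct rewriting of the second expression in Theorem~\ref{thm:incl-excl}: each principal-submatrix trace is replaced by the corresponding coefficient of a log-determinant. The only input is the $x$-free version of Lemma~\ref{lem:logdet-trace}. Apply that lemma, or its proof via Jacobi's formula verbatim, to the matrix $M[S]$ with the vertex variables all set to $1$. This gives, in $\mathbb{Q}[w,r][[t]]$,
$$
-\log\det\bigl(I-tM[S]\bigr)=\sum_{j\ge1}\frac{t^{j}}{j}\tr\bigl(M[S]^{j}\bigr),
$$
and hence
$$
\bigl[t^{k}\bigr]\Bigl(-\log\det\bigl(I-tM[S]\bigr)\Bigr)=\frac{1}{k}\tr\bigl(M[S]^{k}\bigr).
$$
Alternatively, substitute $x=\onev_{S}$ into Lemma~\ref{lem:logdet-trace}. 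Then $XM$ becomes $M$ with the rows outside $S$ set to zero, and Lemma~\ref{lem:restriction} identifies the trace coefficients with $\tr(M[S]^{k})$. Either route gives the displayed identity.

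Next, substitute $\tr(M[S]^{k})=k\,[t^{k}](-\log\det(I-tM[S]))$ into the second formula of Theorem~\ref{thm:incl-excl}. The factor $k$ cancels against the $1/(2k)$ in front and leaves the prefactor $1/2$, which is exactly the claimed expression.

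The only point that needs care is the degenerate case $S=\varnothing$. Here the convention $\tr(M[\varnothing]^{k})=0$ must agree with the determinant side, and it does: the determinant of the empty matrix is $1$, so its logarithm is $0$ and every coefficient vanishes. For $|S|<k$ the binomial coefficient $\binom{n-|S|}{k-|S|}$ is nonzero, so these terms really do contribute. Nothing beyond the two identities above is needed, so I do not expect any genuine obstacle.
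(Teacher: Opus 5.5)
Your proposal is correct and is the same argument as the paper: you apply Lemma~\ref{lem:logdet-trace} with $X=I$ to $M[S]$ to get $[t^{k}]\bigl(-\log\det(I-tM[S])\bigr)=\tr(M[S]^{k})/k$, and then substitute this into the second formula of Theorem~\ref{thm:incl-excl}. One side remark is slightly off: $\binom{n-|S|}{k-|S|}\neq0$ for $|S|<k$ only when $k\le n$. This plays no role in the proof.
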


\begin{proof}
	Apply Lemma~\ref{lem:logdet-trace} with $X=I$ to the matrix $M[S]$: the
	coefficient of $t^{k}$ in $-\log\det(I-tM[S])$ equals $\tr(M[S]^{k})/k$. Now
	substitute into Theorem~\ref{thm:incl-excl}.
\end{proof}

\begin{remark}
	\label{rem:hamiltonian}
	For a fixed $T$ with $|T|=k$ the inner sum of Theorem~\ref{thm:incl-excl},
	$
	\gamma(T)=\sum_{S\subseteq T}(-1)^{k-|S|}\tr\bigl(M[S]^{k}\bigr),
	$
	equals $2k$ times the total weight of the Hamiltonian cycles of the induced
	subgraph $G[T]$; indeed, by the proof above, $\gamma(T)$ collects exactly the
	closed $k$-walks visiting all $k$ vertices of $T$. Thus
	Theorem~\ref{thm:incl-excl} is the weighted form of the classical
	inclusion--exclusion count of Hamiltonian cycles, applied simultaneously to all
	$k$-element vertex subsets. Computationally, if Corollary~\ref{cor:incl-excl-det} is evaluated through
	the determinant, it requires one determinant (equivalently, one relevant
	characteristic-polynomial) computation per subset, while
	Theorem~\ref{thm:sq-extraction} may instead be
	implemented by multiplying matrices over the quotient ring
	$\ZZ[w,r][x_{1},\ldots,x_{n}]/(x_{1}^{2},\ldots,x_{n}^{2})$, in which
	non-squarefree monomials vanish automatically; this is the nilpotent adjacency
	matrix of \cite{Sta08}.
\end{remark}

\begin{remark}
	\label{rem:giscard-comparison}
	Theorem~\ref{thm:incl-excl} should be compared with the sharper
	induced-subgraph formula of Giscard, Kriege and Wilson. Writing $\gamma(\ell)$
	for the number of simple $\ell$-cycles of a digraph $G$, $H\prec_{\mathrm{conn}}G$
	for the weakly connected induced subgraphs and $N(H)$ for the set of neighbours
	of $H$ in $G$, their Equation~(2) reads
	$$
	\gamma(\ell)=\frac{(-1)^{\ell}}{\ell}
	\sum_{H\prec_{\mathrm{conn}}G}\binom{|N(H)|}{\ell-|H|}(-1)^{|H|}
	\tr\bigl(A_{H}^{\ell}\bigr),
	$$
	and is proved in \cite{GRW18} from the Hopf algebra recalled in
	Remark~\ref{rem:hopf-comparison}; see \cite[Eq.~(2)]{GKW19}. Two differences
	matter in practice. Their sum runs only over \emph{connected} induced
	subgraphs, and the binomial coefficient $\binom{|N(H)|}{\ell-|H|}$ vanishes
	unless $|H|\le\ell\le|H|+|N(H)|$, so that only subgraphs on at most $\ell$
	vertices contribute; ours runs over all subsets of size at most $k$ with the
	coefficient $\binom{n-|S|}{k-|S|}$. On many sparse graphs there are far fewer connected induced subgraphs
	than subsets, so their form can be substantially preferable
	computationally. 
	
	We keep Theorem~\ref{thm:incl-excl} because it is the
	statement that comes out of the operator $\sq$ by pure M\"obius inversion, and
	because in the weighted setting each summand is a polynomial in $\ZZ[w,r]$
	which is directly a partial contribution to $CB_{k}$ or $CW_{k}$; but no claim
	of computational novelty is intended.
\end{remark}
%%%%%%%%%%%%

\section{Explicit formulas for \texorpdfstring{$CWR_{4}$}{CWR4} and \texorpdfstring{$CWR_{5}$}{CWR5}}
\label{sec:low-order}

We now specialize the squarefree extraction of
Section~\ref{sec:vertex-variables} to the first components beyond the
previously known cases $k=2,3$ and carry out the resulting
inclusion--exclusion explicitly. For $k=3$ there is nothing to remove.

\begin{proposition}
	\label{prop:k3}
	Let $M$ be the weighted adjacency matrix of a finite simple loopless graph.
	Then every closed $3$-walk is squarefree, and
	$$
	\frac{1}{6}\,\sq\Bigl(\tr\bigl((XM)^{3}\bigr)\Bigr)=\frac{\tr(M^{3})}{6}.
	$$
	Thus, Corollary~\ref{cor:cwrk-sq} for $k=3$ recovers the formula for
	$CWR_{3}$ of Proposition~\ref{prop:cwr23}.
\end{proposition}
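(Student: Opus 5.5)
The plan is to combine Lemma~\ref{lem:walk-expansion} with the loopless hypothesis. By Lemma~\ref{lem:walk-expansion},
$$
\tr\bigl((XM)^{3}\bigr)=\sum_{(i_1,i_2,i_3)\in[n]^3}x_{i_1}x_{i_2}x_{i_3}\,m_{i_1i_2}m_{i_2i_3}m_{i_3i_1},
$$
and a summand can be nonzero only when all three factors $m_{i_1i_2}$, $m_{i_2i_3}$, $m_{i_3i_1}$ are nonzero. Since $M$ has zero diagonal, this forces $i_1\ne i_2$, $i_2\ne i_3$ and $i_3\ne i_1$. For $k=3$ these three consecutive conditions already cover every pair of positions in the cyclic sequence, so the indices are pairwise distinct. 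Hence every monomial $x_{i_1}x_{i_2}x_{i_3}$ carrying a nonzero coefficient is squarefree, which is the first assertion.

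From this it follows that $\sfpart\bigl(\tr((XM)^3)\bigr)=\tr((XM)^3)$. Applying $\sq$ therefore amounts to setting $x_1=\cdots=x_n=1$, that is, evaluating at $x=\onev_{[n]}$. By Lemma~\ref{lem:restriction} with $S=[n]$, or directly because $X$ becomes $I$, this evaluation equals $\tr(M^3)$. Dividing by $6$ gives the displayed identity.

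To get the final sentence, I would take $M=\overline A_B$ and $M=\overline A_W$ in Corollary~\ref{cor:cwrk-sq} with $k=3$, where $2k=6$. This reproduces exactly the formula for $CWR_3$ in Proposition~\ref{prop:cwr23}. As a consistency check, Theorem~\ref{thm:sq-extraction} gives the same value $C_3(M)$, matching the count of six walks per triangle.

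There is no real obstacle here. The one point that needs care is that pairwise distinctness of the three indices comes only from $m_{ii}=0$ together with the cyclic adjacency of the three positions. This is exactly what fails for $k\ge4$, where non-consecutive positions such as $i_1$ and $i_3$ may coincide.
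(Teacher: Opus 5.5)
Your proof is correct and follows the paper's argument: zero diagonal plus the cyclic adjacency of the three positions forces pairwise distinct indices, so $\sfpart$ fixes $\tr((XM)^3)$, and $\sq$ reduces to the substitution $x_i=1$, giving $\tr(M^3)$. The deduction for $CWR_3$ via Corollary~\ref{cor:cwrk-sq} with $2k=6$ is likewise the intended one.
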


\begin{proof}
	A closed walk $i_{1}\to i_{2}\to i_{3}\to i_{1}$ with nonzero weight
	has $i_{1}\ne i_{2}$, $i_{2}\ne i_{3}$ and $i_{3}\ne i_{1}$, because $M$ has
	zero diagonal; hence all closed $3$-walks are already squarefree and $\sq$
	acts on $\tr((XM)^{3})$ by the substitution $x_{i}=1$, giving $\tr(M^{3})$.
\end{proof}

For $k=4$ the walks to be removed are supported on two or three vertices, and
the correction terms can be written in closed matrix form as follows.

	For matrices of the same size, let $M\circ N$ denote the Hadamard
	(entrywise) product. In particular,
	$
	M^{\circ 2}=M\circ M
	$
	is the entrywise square of $M$. Let $\mathbf{1}$ denote the all-ones column
	vector of the appropriate size. For a column vector $y$ with entries in
	$\ZZ[w,r]$, write
	$
	\|y\|_2^2:=y^{\mathsf T}y=\sum_i y_i^2;
	$
	this is only algebraic notation for the indicated quadratic form. For a
	symmetric zero-diagonal matrix $M$, define
	$$
	\Phi_4(M)
	=
	\frac{1}{8}
	\left[
	\tr(M^4)
	-2\left\|M^{\circ 2}\mathbf{1}\right\|_2^2
	+\tr\left((M^{\circ 2})^2\right)
	\right].
	$$

	\begin{theorem}
		\label{thm:cwr4}
		Let $\overline A_B$ and $\overline A_W$ be the weighted adjacency matrices of
		$G_B^*$ and $G_W^*$, respectively. Then
		$$
		CWR_4(L)
		=
		\left(
		\Phi_4(\overline A_B),
		\Phi_4(\overline A_W)
		\right).
		$$
		Equivalently, if $M=(m_{ij})$, then
		$$
		\Phi_4(M)
		=
		\frac{1}{8}
		\left[
		\tr(M^4)
		-2\sum_i\left(\sum_j m_{ij}^2\right)^2
		+\sum_{i,j}m_{ij}^4
		\right].
		$$
	\end{theorem}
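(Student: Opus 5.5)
By Corollary~\ref{cor:cwrk-sq} with $k=4$ and Theorem~\ref{thm:sq-extraction}, it suffices to show, for any symmetric zero-diagonal $M$, that $8\,C_4(M)=\tr(M^4)$ minus the total weight of the closed $4$-walks that revisit a vertex. The plan is to classify those non-squarefree walks and show that their total weight equals $2\|M^{\circ2}\mathbf 1\|_2^2-\tr((M^{\circ2})^2)$. Invariance then follows from Theorem~\ref{thm:cwr-invariance}. The equivalent entrywise expression is a direct expansion: $(M^{\circ2}\mathbf 1)_i=\sum_j m_{ij}^2$, and $\tr((M^{\circ2})^2)=\sum_{i,j}m_{ij}^2m_{ji}^2=\sum_{i,j}m_{ij}^4$ by symmetry.

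Let $(i_1,i_2,i_3,i_4)$ be a closed $4$-walk with nonzero weight. Because the diagonal of $M$ is zero, cyclically consecutive indices are distinct. A repetition can therefore only be $i_1=i_3$ or $i_2=i_4$ (or both). We handle the walks by inclusion--exclusion on these two events.

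\emph{Walks with $i_1=i_3$.} Their weight is $m_{i_1i_2}m_{i_2i_1}m_{i_1i_4}m_{i_4i_1}=m_{i_1i_2}^2m_{i_1i_4}^2$. Here $i_2$ and $i_4$ range freely over the vertices, with $i_2=i_4$ allowed. The total is $\sum_{i}\bigl(\sum_j m_{ij}^2\bigr)^2=\|M^{\circ2}\mathbf 1\|_2^2$.

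\emph{Walks with $i_2=i_4$.} By the same computation centred at $i_2$, these also contribute $\|M^{\circ2}\mathbf 1\|_2^2$.

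\emph{Walks with both $i_1=i_3$ and $i_2=i_4$.} These are the walks $i\to j\to i\to j$, of weight $m_{ij}^4$. They total $\sum_{i,j}m_{ij}^4=\tr((M^{\circ2})^2)$.

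By inclusion--exclusion, the non-squarefree closed $4$-walks therefore have total weight $2\|M^{\circ2}\mathbf 1\|_2^2-\tr((M^{\circ2})^2)$. Substituting $x=\mathbf 1$ in Lemma~\ref{lem:walk-expansion} shows that $\tr(M^4)$ is the total weight of all closed $4$-walks. Hence
\[
\sq\bigl(\tr((XM)^4)\bigr)=\tr(M^4)-2\|M^{\circ2}\mathbf 1\|_2^2+\tr((M^{\circ2})^2).
\]
Dividing by $2k=8$ gives $\Phi_4(M)=C_4(M)$.

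The main obstacle is the first step, the case analysis: one must verify that zero diagonal really forces every repetition into the form $i_1=i_3$ or $i_2=i_4$, and that the term $j=l$ in the first two sums (walks supported on two vertices) is exactly the overlap being subtracted. A useful sanity check is $M=$ the adjacency matrix of the $4$-cycle. There $\tr(M^4)=32$, $\|M^{\circ2}\mathbf1\|^2=16$ and $\tr((M^{\circ2})^2)=8$, so $\Phi_4=(32-32+8)/8=1$, as it should be.
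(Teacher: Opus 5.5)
Your proof is correct and takes essentially the same route as the paper. Both reduce, via Theorem~\ref{thm:sq-extraction}, to showing that the repeated-vertex closed $4$-walks have total weight $2S-F$, where $S=\|M^{\circ2}\mathbf{1}\|_2^2$ and $F=\tr\bigl((M^{\circ2})^2\bigr)$, using that a zero diagonal forces $i_1=i_3$ or $i_2=i_4$; the only cosmetic difference is that you count this as $S+S-F$ by inclusion--exclusion, whereas the paper partitions the walks by support size as $F+2(S-F)$.
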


\begin{proof}
	It is enough to prove the formula for one of the two weighted Tait graphs, so
	let $M=(m_{ij})$ be the weighted adjacency matrix of a finite simple graph
	without loops. By Theorem~\ref{thm:sq-extraction},
	$
	C_{4}(M)=\frac{1}{8}\,\sq\Bigl(\tr\bigl((XM)^{4}\bigr)\Bigr)
	=\frac{1}{8}\bigl(\tr(M^{4})-D_{4}\bigr),
	$
	where, by Lemma~\ref{lem:walk-expansion}, $\tr(M^{4})$ is the total weight
	of all rooted, oriented closed walks of length $4$ and $D_{4}$ is the total
	weight of those closed $4$-walks that repeat a vertex.

	Since $M$ has zero diagonal, a closed $4$-walk
	$i_{1}\to i_{2}\to i_{3}\to i_{4}\to i_{1}$ with nonzero weight has
	$i_{j}\ne i_{j+1}$ cyclically, so a repetition can occur only through
	$i_{1}=i_{3}$ or $i_{2}=i_{4}$. Put
	$
	S=\sum_i\left(\sum_j m_{ij}^2\right)^2,
	\;
	F=\sum_{i,j}m_{ij}^4.
	$
	The closed $4$-walks supported on two vertices are those with both
	coincidences $i_{1}=i_{3}$ and $i_{2}=i_{4}$; for each ordered pair $(i,j)$
	with $m_{ij}\ne0$ there is exactly one such walk, of weight $m_{ij}^{4}$, so
	together they contribute $F$. The closed $4$-walks supported on exactly
	three vertices are those with exactly one of the two coincidences; each of
	the two cases contributes $S-F$, so together they contribute $2(S-F)$.
	Hence
	$
	D_{4}=F+2(S-F)=2S-F,
	$
	and therefore
	$
	C_4(M)=\frac{1}{8}\bigl(\tr(M^4)-2S+F\bigr).
	$
	Since
	$
	S=\left\|M^{\circ 2}\mathbf{1}\right\|_2^2
	$
	and, because $M$ is symmetric,
	$
	F=\tr\left((M^{\circ 2})^2\right),
	$
	the asserted formula follows. Applying it to $\overline A_B$ and
	$\overline A_W$ gives the two coordinates of $CWR_4(L)$; invariance is
	Theorem~\ref{thm:cwr-invariance}.
\end{proof}

\begin{remark}
	\label{rem:cwr4-walks}
	The correction terms in Theorem~\ref{thm:cwr4} remove precisely the
	length-$4$ closed walks with repeated vertices. Equivalently, since every
	unoriented simple $4$-cycle contributes its edge-weight product exactly
	eight times to $\tr(M^4)$, corresponding to four choices of the initial
	vertex and two orientations (the case $k=4$ of
	Theorem~\ref{thm:sq-extraction}), the proof above amounts to the
	closed-walk identity
	$
	\tr(M^4)=8\,C_4(M)+2S-F.
	$
	For $w=r=1$ the formula reduces to the classical expression
	\cite[Eq.~(2)]{HM71}.
\end{remark}

\begin{example}
	\label{ex:k7a1-again}
	We verify Theorem~\ref{thm:incl-excl} for $k=4$ on the white graph of the knot
	$K7a1$, whose weighted adjacency matrix $M=\overline A_{W}$ is computed in
	\cite[Example~3.6]{JabCWR24}:
	$
	M=
	\begin{bmatrix}
		0 & r^{2} & r^{2} & w\\
		r^{2} & 0 & 0 & w\\
		r^{2} & 0 & 0 & w\\
		w & w & w & 0
	\end{bmatrix}.
	$
	Here $n=4$, so the outer sum of Theorem~\ref{thm:incl-excl} has the single
	term $T=\{1,2,3,4\}$. A direct computation gives
	$
	\tr\bigl(M^{4}\bigr)=8r^{8}+24r^{4}w^{2}+18w^{4},
	$
	$
	\sum_{|S|=3}\tr\bigl(M[S]^{4}\bigr)=12r^{8}+16r^{4}w^{2}+24w^{4},
	\;
	\sum_{|S|=2}\tr\bigl(M[S]^{4}\bigr)=4r^{8}+6w^{4},
	$
	while the subsets with $|S|\le1$ contribute $0$. Hence
	$
	\gamma(T)=\bigl(8r^{8}+24r^{4}w^{2}+18w^{4}\bigr)
	-\bigl(12r^{8}+16r^{4}w^{2}+24w^{4}\bigr)
	+\bigl(4r^{8}+6w^{4}\bigr)
	=8r^{4}w^{2},
	$
	and $C_{4}(M)=\gamma(T)/8=r^{4}w^{2}$. This is the total weight of the unique
	simple $4$-cycle of $G_{W}^{*}$, and it agrees with the white coordinate of
	$CWR_{4}(K7a1)=(2r^{2}w^{2},r^{4}w^{2})$ listed in
	\cite[Table~1]{JabCWR24}. Equivalently, by Theorem~\ref{thm:logdet-main},
	$
	\bigl[t^{4}\bigr]\,\sq\Bigl(-\log\det\bigl(I-tXM\bigr)\Bigr)=2r^{4}w^{2}.
	$
	The same value is obtained from Theorem~\ref{thm:cwr4}: here
	$\tr(M^{4})=8r^{8}+24r^{4}w^{2}+18w^{4}$, $S=6r^{8}+8r^{4}w^{2}+12w^{4}$ and
	$F=4r^{8}+6w^{4}$, so that $\Phi_{4}(M)=\tfrac18\cdot 8r^{4}w^{2}=r^{4}w^{2}$.
\end{example}

The squarefree extraction of Section~\ref{sec:vertex-variables} makes the next
component computable in closed form as well. Recall the Hadamard powers
$M^{\circ 2}$ and $M^{\circ 3}$ and the all-ones vector $\onev$, and for a
symmetric zero-diagonal matrix $M=(m_{ij})$ put
$
\Delta(M)=\operatorname{diag}\bigl(M^{\circ 2}\onev\bigr),
\;\text{that is,}\;
\Delta(M)_{ii}=\sum_{j}m_{ij}^{2},
$
the diagonal matrix of weighted square-degrees. Define
$
\Phi_{5}(M)=\frac{1}{10}
\Bigl[
\tr(M^{5})-5\,\tr\bigl(\Delta(M)\,M^{3}\bigr)+5\,\tr\bigl(M^{\circ 3}M^{2}\bigr)
\Bigr].
$

\begin{theorem}
	\label{thm:cwr5}
	Let $\overline A_{B}$ and $\overline A_{W}$ be the weighted adjacency matrices of
	$G_{B}^{*}$ and $G_{W}^{*}$. Then
	$
	CWR_{5}(L)=\bigl(\Phi_{5}(\overline A_{B}),\,\Phi_{5}(\overline A_{W})\bigr).
	$
	Equivalently, in entries,
	$$
	\Phi_{5}(M)=\frac{1}{10}
	\left[
	\tr(M^{5})
	-5\sum_{i}\Bigl(\sum_{j}m_{ij}^{2}\Bigr)(M^{3})_{ii}
	+5\sum_{i,j}m_{ij}^{3}(M^{2})_{ij}
	\right].
	$$
\end{theorem}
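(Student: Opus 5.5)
The plan is to mirror the proof of Theorem~\ref{thm:cwr4}. By Theorem~\ref{thm:sq-extraction} applied with $k=5$,
$$
C_5(M)=\frac{1}{10}\,\sq\Bigl(\tr\bigl((XM)^5\bigr)\Bigr)=\frac{1}{10}\bigl(\tr(M^5)-D_5\bigr),
$$
where $D_5$ is the total weight of the rooted, oriented closed $5$-walks that repeat a vertex (Lemma~\ref{lem:walk-expansion}). It therefore suffices to prove
$$
D_5=5\,\tr\bigl(\Delta(M)M^3\bigr)-5\,\tr\bigl(M^{\circ3}M^2\bigr).
$$
Applying the result to $\overline A_B$ and $\overline A_W$ then gives the two coordinates, and invariance is Theorem~\ref{thm:cwr-invariance}. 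The entrywise form follows from $\Delta(M)_{ii}=\sum_j m_{ij}^2$ and, since $M$ is symmetric, $\tr(M^{\circ3}M^2)=\sum_{i,j}m_{ij}^3(M^2)_{ij}$.

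\emph{Structure of repeated-vertex walks.} Index the walk $i_1\to\cdots\to i_5\to i_1$ cyclically. Because $M$ has zero diagonal, consecutive indices differ, so any coincidence $i_p=i_q$ has $p,q$ at cyclic distance $2$. No index occurs three times, since the $5$-cycle has no three pairwise non-adjacent positions. Call a position $p$ a \emph{center} if $i_{p-1}=i_{p+1}$. A walk repeats a vertex exactly when it has at least one center. Deleting a center $p$ together with one copy of $i_{p-1}$ leaves a rooted-position closed $3$-walk, i.e.\ an oriented triangle. Thus every non-squarefree closed $5$-walk is a closed $3$-walk with an excursion $u\to j\to u$ inserted at a vertex $u$, and its weight is $m_{uj}^2$ times the triangle weight. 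No walk supported on two vertices exists, because a closed walk of odd length cannot live on a single edge.

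\emph{Counting with multiplicity.} The quantity $\tr(\Delta(M)M^3)=\sum_i\bigl(\sum_j m_{ij}^2\bigr)(M^3)_{ii}$ sums over pairs consisting of a closed $3$-walk rooted at $i$ and an excursion $i\to j\to i$, where $j$ is arbitrary and may lie on the triangle. Splicing the excursion in at the root and then taking the $5$ cyclic rotations of the resulting sequence, I will check that $5\tr(\Delta M^3)$ counts each pair (rooted non-squarefree $5$-walk, chosen center) exactly once. Hence $5\tr(\Delta M^3)=\sum_W c(W)\wgt(W)$, where $c(W)$ is the number of centers of $W$.

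\emph{Correcting the double counts (the main obstacle).} I expect $c(W)\le2$, with $c(W)=2$ exactly for walks with two centers at adjacent positions, such as $i\to j\to i\to j\to k\to i$. These are the walks on three vertices in which the excursion runs along an edge of the triangle. Such a walk has weight $m_{ij}^3m_{jk}m_{ki}$, and summing over $i,j,k$ gives $\sum_{i,j}m_{ij}^3(M^2)_{ij}=\tr(M^{\circ3}M^2)$. Its $5$ rotations account for all rooted walks with two centers, each counted once, so the excess in $5\tr(\Delta M^3)$ is exactly $5\tr(M^{\circ3}M^2)$.

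The delicate points, which I will verify by a short case analysis of which position pairs can both be centers, are:
\begin{itemize}
\item three centers are impossible;
\item non-adjacent centers are impossible, since they would force a vertex to occur three times or two consecutive indices to be equal;
\item the rotation bijection neither misses nor repeats sequences, including in the case $j$ on the triangle.
\end{itemize}
A sanity check on $K_3$ plus a pendant edge, or on $K_4$ with $w=r=1$, should confirm the constants.
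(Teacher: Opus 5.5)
Your proposal is correct, and it organizes the count of degenerate walks differently from the paper.

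\textbf{The paper's route.} The paper first classifies the edge multigraph $H$ of a degenerate closed $5$-walk by its even degree sequence. The two cases are $(4,2,2,2)$, a triangle plus a doubled pendant edge, and $(4,4,2)$, a triangle with one edge tripled. It then counts walks attached to unoriented data $(T,a,d)$: a triangle, one of its vertices, and a neighbour of that vertex, with ten sequences per datum. This requires several separate checks:
\begin{itemize}
\item the ten sequences are pairwise distinct, using the primality of $5$ and a reversal argument;
\item distinct data give distinct walks except for the coincidence $(T,a,d)\sim(T,d,a)$ when $d\in V(T)$;
\item the unoriented sums are converted to traces via $(M^{3})_{aa}=2\sum_{T\ni a}\wgt(T)$ and $\tr(M^{\circ3}M^{2})=2\sum_{T}\wgt(T)\sum_{e\in E(T)}\wgt(e)^{2}$.
\end{itemize}

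\textbf{Your route.} You stay with rooted, oriented index sequences throughout. Since every coincidence sits at cyclic distance $2$, degeneracy becomes the positional notion of a center. The unique rotation moving a chosen center to position $2$ gives a weight-preserving bijection, so $5\,\tr(\Delta(M)M^{3})=\sum_{W}c(W)\wgt(W)$ holds exactly. Normalizing the adjacent pair of centers in the same way gives $\sum_{c(W)=2}\wgt(W)=5\,\tr(M^{\circ3}M^{2})$.

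\textbf{Comparison.} The paper's exceptional coincidence for $d\in V(T)$ is exactly your class $c(W)=2$, so the combinatorial content is the same. Your version avoids both the degree-sequence classification and the orientation and distinctness bookkeeping. It also explains the coefficient $5$ in front of both correction terms directly as the number of rotations. The paper's version, in exchange, exhibits the two degenerate shapes explicitly and links them to the Harary--Manvel classification.

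The checks you defer are immediate:
\begin{itemize}
\item Centers at $p$ and $p+2$ would give $i_{p-1}=i_{p+1}=i_{p+3}$. Since $p+3\equiv p-2$ is adjacent to $p-1$, this forces two consecutive indices to be equal, which the zero diagonal rules out.
\item Any three positions of the $5$-cycle contain two at distance $2$, so three centers are impossible.
\item Zero-weight index tuples may be included in both bijections without affecting either sum.
\end{itemize}
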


\begin{proof}
	As in Theorem~\ref{thm:cwr4} it suffices to treat one weighted Tait graph, so
	let $M$ be the weighted adjacency matrix of a finite simple loopless graph $G$.
	By Theorem~\ref{thm:sq-extraction},
	$
	C_{5}(M)=\frac{1}{10}\,\sq\Bigl(\tr\bigl((XM)^{5}\bigr)\Bigr)
	=\frac{1}{10}\bigl(\tr(M^{5})-D_{5}\bigr),
	$
	where $D_{5}$ is the total weight of the closed $5$-walks that repeat a vertex.
	
	\emph{Step 1: the shape of a degenerate closed $5$-walk.}
	Let $W$ be a closed $5$-walk and let $H$ be the multigraph formed by its five
	edge-steps, so that $W$ is an Eulerian circuit of $H$; in particular $H$ is
	connected, loopless, and all its degrees are even, with degree sum $10$. Let
	$s$ be the number of vertices of $H$. A closed walk alternating between two
	vertices has even length, so $s\ge 3$, and $s=5$ exactly when $W$ is a
	traversal of a simple $5$-cycle.
	
	Suppose $s=4$. The even degrees sum to $10$, hence are $(4,2,2,2)$; let $a$ be
	the vertex of degree $4$ and $b,c,d$ the remaining ones. Since $H$ is
	loopless, exactly four of the five edges of $H$, counted with multiplicity, are
	incident to $a$, so exactly one edge of $H$ lies inside $\{b,c,d\}$; say it
	joins $b$ and $c$. Then $b$ and $c$ each have one remaining degree-slot, which
	must be filled by an edge to $a$, and both remaining slots at $d$ are filled by
	edges to $a$. Hence, $H$ is the triangle $abc$ together with the edge $ad$
	taken twice.
	
	Suppose $s=3$, with vertices $a,b,c$ and edge multiplicities
	$m_{ab},m_{ac},m_{bc}$. The even degrees summing to $10$ are $(4,4,2)$, the
	sequence $(6,2,2)$ being impossible because the corresponding linear system
	forces the multiplicity $m_{bc}=-1$. Solving the system
	$m_{ab}+m_{ac}=4$, $m_{ab}+m_{bc}=4$, $m_{ac}+m_{bc}=2$ gives
	$(m_{ab},m_{ac},m_{bc})=(3,1,1)$, so $H$ is the triangle $abc$ with the edge
	$ab$ taken three times, that is, the triangle together with one of its own
	edges taken twice more.
	
	Thus, in both cases every degenerate closed $5$-walk is a triangle traversal
	with an out-and-back excursion along an edge $\{a,d\}$ incident to one of its
	vertices: for $s=4$ the vertex $d$ lies outside the triangle, and for $s=3$ the
	edge $\{a,d\}$ is an edge of the triangle. This is the case
	$n=5$ of the shape classification of \cite{HM71}; the two shapes are those
	displayed in \cite[Fig.~3]{HM71}.
	
	\emph{Step 2: counting.}
	Fix a triangle $T$ with vertex set $\{a,b,c\}$, a vertex $a$ of $T$ and a
	neighbour $d$ of $a$ in $G$. The corresponding closed walks are obtained by
	concatenating, at $a$, the excursion $a\to d\to a$ with a traversal of $T$;
	there are two choices of orientation of $T$ and five choices of the initial
	step, giving ten sequences, each of weight $\wgt(T)\,m_{ad}^{2}$.
	
	These ten sequences are pairwise distinct. Two rotations of one and the same
	circuit agree only if the circuit is invariant under a nontrivial rotation;
	since $5$ is prime, such a rotation generates the full cyclic group and forces
	all five entries to coincide, which is impossible as $a\ne b$. It remains to
	check that no rotation of the reversed circuit equals a rotation of the
	original one. If $d\in V(T)$, relabel the two vertices of
	$T\setminus\{a\}$ so that $d=b$; thus in all cases considered below
	we may assume $d\neq c$. Writing the original circuit as $(a,d,a,b,c)$, its reversal is
	$(c,b,a,d,a)$. The vertex $c$ occurs exactly once in the original circuit, so
	the only rotation of the original beginning with $c$ is $(c,a,d,a,b)$;
	comparing second entries with $(c,b,a,d,a)$ gives $b=a$, a contradiction.
	
	Distinct data $(T,a,d)$ give distinct walks, with one exception: if $d$ is a
	vertex of $T$, then $(T,a,d)$ and $(T,d,a)$ produce the same ten walks, since
	the two resulting circuits differ only by a rotation or a reversal. Hence
	$
	D_{5}=10\left[
	\sum_{T}\wgt(T)\sum_{a\in V(T)}\ \sum_{d}m_{ad}^{2}
	\;-\;\sum_{T}\wgt(T)\sum_{e\in E(T)}\wgt(e)^{2}
	\right],
	$
	both outer sums being over the triangles $T$ of $G$.
	
	\emph{Step 3: matrix form.}
	The inner sum $\sum_{d}m_{ad}^{2}$ is $\Delta(M)_{aa}$, and
	$(M^{3})_{aa}=2\sum_{T\ni a}\wgt(T)$, because the closed $3$-walks based at
	$a$ are the two orientations of the triangles through $a$, so
	$
	\sum_{T}\wgt(T)\sum_{a\in V(T)}\Delta(M)_{aa}
	=\frac{1}{2}\sum_{a}\Delta(M)_{aa}(M^{3})_{aa}
	=\frac{1}{2}\tr\bigl(\Delta(M)M^{3}\bigr).
	$
	For the second sum, expand
	$
	\tr\bigl(M^{\circ 3}M^{2}\bigr)=\sum_{i,j,k}m_{ij}^{3}m_{jk}m_{ki}.
	$
	Since $M$ has zero diagonal, only triples of pairwise distinct indices
	contribute, i.e. only triangles $\{i,j,k\}$ with a distinguished ordered edge
	$(i,j)$; each triangle and each of its three edges occurs for two orderings,
	so
	$
	\tr\bigl(M^{\circ 3}M^{2}\bigr)=2\sum_{T}\wgt(T)\sum_{e\in E(T)}\wgt(e)^{2}.
	$
	Therefore
	$
	D_{5}=5\Bigl[\tr\bigl(\Delta(M)M^{3}\bigr)-\tr\bigl(M^{\circ 3}M^{2}\bigr)\Bigr],
	$
	and substituting this into $C_{5}(M)=\bigl(\tr(M^{5})-D_{5}\bigr)/10$ gives
	$C_{5}(M)=\Phi_{5}(M)$. Applying this to $\overline A_{B}$ and
	$\overline A_{W}$ yields the two coordinates of $CWR_{5}(L)$.
\end{proof}

\begin{remark}
	\label{rem:cwr5-unweighted}
	If all edge weights are equal to $1$, then $\Delta(M)$ is the diagonal matrix
	of vertex degrees and $M^{\circ 3}=M$, so Theorem~\ref{thm:cwr5} reduces to
	$
	\frac{1}{10}\Bigl[\tr(A^{5})-5\sum_{i}d_{i}(A^{3})_{ii}+5\tr(A^{3})\Bigr],
	$
	the classical count of the $5$-cycles of a graph, which is
	\cite[Eq.~(3)]{HM71}.
\end{remark}

\begin{example}
	\label{ex:k7a1-cwr5}
	For the knot $K7a1$ the black graph $G_{B}^{*}$ has the weighted adjacency
	matrix
	$
	\overline A_{B}=
	\begin{bmatrix}
		0 & 0 & r & r & 0\\
		0 & 0 & 0 & r & r\\
		r & 0 & 0 & w & w\\
		r & r & w & 0 & w\\
		0 & r & w & w & 0
	\end{bmatrix}
	$
	of \cite[Example~3.6]{JabCWR24}. The three terms of $\Phi_{5}$ are
	$
	\tr(\overline A_{B}^{5})=70r^{4}w+100r^{2}w^{3}+30w^{5},
	$
	$
	\tr\bigl(\Delta(\overline A_{B})\overline A_{B}^{3}\bigr)
	=20r^{4}w+24r^{2}w^{3}+12w^{5},
	\;
	\tr\bigl(\overline A_{B}^{\circ 3}\overline A_{B}^{2}\bigr)
	=8r^{4}w+4r^{2}w^{3}+6w^{5},
	$
	so that
	$
	\Phi_{5}(\overline A_{B})
	=\tfrac{1}{10}\bigl[(70-100+40)r^{4}w+(100-120+20)r^{2}w^{3}
	+(30-60+30)w^{5}\bigr]=r^{4}w .
	$
	This is confirmed directly: the vertices $1$ and $2$ have degree $2$, so any
	Hamiltonian cycle must use all four of their incident edges, and one checks
	that the only simple $5$-cycle of $G_{B}^{*}$ is $3\to1\to4\to2\to5\to3$, of
	weight $r^{4}w$. The graph $G_{W}^{*}$ has only four vertices, so
	$\Phi_{5}(\overline A_{W})=0$. This agrees with
	$CWR_{5}(K7a1)=(r^{4}w,0)$ in \cite[Table~1]{JabCWR24}.
\end{example}

%%%%%%%%%%%%

\section{Odd components, bipartiteness, and the characteristic polynomial}
\label{sec:odd-cycles}

The formulas above compute one component of $CWR$ at a time. For the odd
components there is, in addition, a purely spectral criterion for the vanishing
of all of them at once, and a closed expression for the first one that does not
vanish. Throughout this section $G$ denotes one of the consolidated Tait
graphs $G_{B}^{*}$ or $G_{W}^{*}$, with weighted adjacency matrix $\overline A$
and ordinary adjacency matrix $A=A(G)$; by $C_{k}(\overline A)$ we mean, as
above, the total weight of the simple $k$-cycles of $G$, that is, the black or
white coordinate of $CWR_{k}$ according to the choice of $G$. The statements
are again purely graph-theoretic, and are transferred to the invariant by
Theorem~\ref{thm:cwr-invariance}.

\begin{proposition}
	\label{prop:cwr-bipartite}
	Let $G$ be either of the consolidated Tait graphs
	$G_B^{*}$ or $G_W^{*}$, let $A=A(G)$ be its ordinary adjacency
	matrix, and write
	$
	\det(\lambda I-A)
	=
	\lambda^n+c_1\lambda^{n-1}+\cdots+c_n.
	$
	Then the following conditions are equivalent:
	\begin{enumerate}
		\item the corresponding component of $CWR_k$ vanishes for every odd
		$k\ge3$;
		\item $G$ contains no odd cycle;
		\item $G$ is bipartite;
		\item
		$
		c_{2j+1}=0
		$
		for every $j$ for which the coefficient is defined;
		\item the adjacency spectrum of $G$ is symmetric about zero:
		if $\lambda$ is an eigenvalue of $A$ with multiplicity $m$, then
		$-\lambda$ is also an eigenvalue with multiplicity $m$.
	\end{enumerate}
\end{proposition}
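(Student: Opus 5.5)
The plan is to prove the chain (1)$\Leftrightarrow$(2)$\Leftrightarrow$(3) by elementary graph theory, and then tie (3) to (4) and (5) through closed-walk counts.

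\emph{(1)$\Leftrightarrow$(2).} If $G$ has no odd cycle, every $C_k(\overline A)$ with $k$ odd is an empty sum, so (2)$\Rightarrow$(1). For (1)$\Rightarrow$(2), use the fact that every edge weight of $G_B^*$ or $G_W^*$ is a monomial $w^ar^b$ with positive integer coefficient $1$. Hence $\wgt(C)$ is a monomial with coefficient $1$ for every cycle $C$. A sum of such monomials therefore has nonnegative integer coefficients and cannot cancel. So $C_k(\overline A)=0$ forces $\mathcal C_k(G)=\varnothing$. This positivity step is the only place where the Tait-graph hypothesis is used, and it is the point to state carefully.

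\emph{(2)$\Leftrightarrow$(3).} This is König's classical theorem. For (2)$\Rightarrow$(3), two-colour each component by parity of BFS distance from a root. An edge joining two vertices of equal parity would close an odd closed walk. An odd closed walk always contains an odd simple cycle, by splitting at a repeated vertex and inducting on length. For (3)$\Rightarrow$(2), any cycle in a bipartite graph alternates between the two colour classes and so has even length.

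\emph{(3)$\Rightarrow$(5).} Order the vertices by the bipartition so that $A=\begin{bmatrix}0&B\\B^{\mathsf T}&0\end{bmatrix}$. Let $D=\operatorname{diag}(I,-I)$. Then $DAD^{-1}=-A$, so $A$ and $-A$ are similar. Since $A$ is symmetric, similar means equal spectra with multiplicities, which is exactly (5).

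\emph{(5)$\Rightarrow$(4).} A symmetric spectrum gives $\det(\lambda I-A)=\det(\lambda I+A)$, the latter being the characteristic polynomial of $-A$. Now $\det(\lambda I+A)=(-1)^n\det(-\lambda I-A)$, so $p(\lambda)=(-1)^np(-\lambda)$. Comparing coefficients of $\lambda^{n-m}$ gives $c_m=(-1)^m c_m$, so $c_m=0$ for all odd $m$.

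\emph{(4)$\Rightarrow$(3).} If all odd $c_m$ vanish, then $p(\lambda)=(-1)^np(-\lambda)$, so the eigenvalue multiset is invariant under negation. Hence $\tr(A^k)=\sum_i\alpha_i^k=0$ for every odd $k$. By \cite[Theorem~5.2.7]{WW19}, $\tr(A^k)$ counts closed walks of length $k$, so there are no odd closed walks at all. In particular there is no odd cycle, which is (2), and (3) follows from the previous paragraph.

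Alternatively, one could use Sachs' expansion directly: odd $c_m$ involve only Sachs subgraphs with an odd cycle component. However, the shortest odd cycle shows $c_g=-2\cdot\#\{g\text{-cycles}\}\neq 0$ for the odd girth $g$, since every Sachs subgraph on $g$ vertices must then be that cycle. That route also works. The trace route is simpler, so I will use it.
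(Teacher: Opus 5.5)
Your proof is correct, and its skeleton is the paper's. The no-cancellation argument for monomial edge weights gives (1)$\Leftrightarrow$(2), and in both proofs the identity $p(\lambda)=(-1)^n p(-\lambda)$ links (4) and (5).

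The one real difference concerns the equivalence of bipartiteness with symmetry of the adjacency spectrum. The paper simply cites this as classical, pointing to the Sachs expansion for the parity vanishing of the $c_{2j+1}$, and treats (2)$\Leftrightarrow$(3) as standard. You prove both.
\begin{itemize}
\item The signature similarity $DAD^{-1}=-A$ gives (3)$\Rightarrow$(5).
\item The vanishing of $\tr(A^k)=\sum_i\alpha_i^k$ for odd $k$, read as a count of closed $k$-walks, gives the converse through (2).
\end{itemize}
Your version is self-contained. It rests only on the walk interpretation of traces that the paper already uses in Section~\ref{sec:vertex-variables}. The paper's version is shorter because it outsources exactly this step to the references.

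Your alternative route via the odd girth $g$ and $c_g=-2\,\#\{g\text{-cycles}\}$ is the Sachs argument the paper uses in Corollary~\ref{cor:first-odd-cwr}. So either of your routes fits with the rest of the paper.
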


\begin{proof}
	For $k\ge3$, the corresponding component of $CWR_k$ is a sum over
	the simple $k$-cycles of $G$ of monomials with positive integer
	coefficients. Since no cancellation between such monomials is possible, it
	vanishes if and only if $G$ contains no $k$-cycle. Thus, (1) is equivalent to
	the absence of odd cycles, and hence to bipartiteness.
	
	The equivalence of bipartiteness and symmetry of the adjacency spectrum
	about the origin is classical; see
	\cite[Theorems~5.2.2--5.2.3 and Corollary~5.2.2]{WW19}
	and \cite[Theorem~3.14]{Bap14}. The corresponding parity vanishing
	of the characteristic-polynomial coefficients also follows directly
	from the Sachs expansion; see \cite[Corollary~5.2.7]{WW19}.
	
	Moreover, condition~(4) is equivalent to
	$
	\det(-\lambda I-A)=(-1)^n\det(\lambda I-A),
	$
	and hence to symmetry of the multiset of roots of the characteristic
	polynomial about zero. Thus, (4) is equivalent to~(5).
\end{proof}

\begin{corollary}
	\label{cor:first-odd-cwr}
	Let $G$ be one of $G_B^{*},G_W^{*}$, with weighted adjacency matrix
	$\overline A$ and ordinary adjacency matrix $A=A(G)$, let
	$
	\det(\lambda I-A)
	=
	\lambda^n+c_1\lambda^{n-1}+\cdots+c_n,
	$
	and suppose $2q+1\le n$. If the corresponding components of
	$
	CWR_3,CWR_5,\ldots,CWR_{2q-1}
	$
	all vanish, then
	$$
	\left.
	C_{2q+1}(\overline A)
	\right|_{w=r=1}
	=
	-\frac{1}{2}c_{2q+1}.
	$$
	
	Equivalently, when $2q+1$ is the length of the shortest odd cycle
	of $G$, the number of such shortest odd cycles is
	$-c_{2q+1}/2$.
\end{corollary}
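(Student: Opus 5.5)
The plan is to use the Sachs expansion of the characteristic polynomial, recalled in the introduction. It gives
$$
c_{j}=\sum_{H\in\mathcal S_j(G)}(-1)^{w(H)}2^{c(H)},
$$
where $\mathcal S_j(G)$ is the set of Sachs subgraphs on exactly $j$ vertices. Each component of such an $H$ is either a single edge ($K_2$) or a simple cycle. We take $j=2q+1$ and show that the only Sachs subgraphs surviving the hypothesis are single $(2q+1)$-cycles.

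First we translate the hypothesis into graph terms. As in the proof of Proposition~\ref{prop:cwr-bipartite}, each $C_k(\overline A)$ with $k\ge3$ is a sum of monomials in $w,r$ with positive integer coefficients, so no cancellation can occur. Hence the vanishing of the components of $CWR_3,\ldots,CWR_{2q-1}$ means that $G$ has no simple cycle of odd length $k$ with $3\le k\le 2q-1$. Evaluating at $w=r=1$ turns each monomial $\wgt(C)$ into $1$, so $C_{2q+1}(\overline A)|_{w=r=1}$ is the number $N$ of simple $(2q+1)$-cycles of $G$.

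Next comes the parity step, which is the key combinatorial point. Let $H\in\mathcal S_{2q+1}(G)$. Its vertex count $2q+1$ is odd. Components that are edges contribute $2$ vertices each, and even cycles contribute an even number, so $H$ must contain at least one odd cycle component. That cycle has odd length at most $2q+1$, and by hypothesis every odd cycle of length at most $2q-1$ is absent. So the odd cycle has length exactly $2q+1$ and uses every vertex of $H$. Therefore $H$ is a single $(2q+1)$-cycle, with $w(H)=1$ and $c(H)=1$. Each such cycle contributes $(-1)^1 2^1=-2$, which gives $c_{2q+1}=-2N$, i.e. $N=-c_{2q+1}/2$. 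The condition $2q+1\le n$ only ensures that $c_{2q+1}$ is defined.

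For the equivalent formulation, suppose $2q+1$ is the length of the shortest odd cycle. Then there are no odd cycles of length $3,\ldots,2q-1$, so the hypothesis holds, and the number of shortest odd cycles is $N=-c_{2q+1}/2$. I expect no real obstacle beyond keeping the Sachs sign convention straight: $(-1)^{w(H)}$ with $w(H)$ the number of components, as stated in the introduction. As a sanity check I would verify it on a triangle, where $\det(\lambda I-A)=\lambda^3-3\lambda-2$, so $c_3=-2$ and the formula gives one triangle.
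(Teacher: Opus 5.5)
Your proposal is correct and follows essentially the same route as the paper's proof. You translate the vanishing hypothesis into the absence of odd cycles of length below $2q+1$, using the fact that cycle weights are monomials with positive coefficients. Then the Sachs expansion plus the parity argument shows that the only Sachs subgraphs on $2q+1$ vertices are the simple $(2q+1)$-cycles, each contributing $(-1)^1 2^1=-2$.
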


\begin{proof}
	The assumed vanishing means that $G$ contains no odd cycle of length
	less than $2q+1$. Apply Sachs' expansion of the adjacency
	characteristic polynomial \cite[Theorem~5.2.9]{WW19}; compare also
	\cite[Corollary~3.11]{Bap14}. A Sachs
	subgraph on $2q+1$ vertices is a disjoint union of single edges and
	simple cycles. Since its number of vertices is odd, at least one of
	its cycle components must have odd length. By the hypothesis such a
	cycle has length at least $2q+1$, and therefore it must use all
	$2q+1$ vertices. Consequently the Sachs subgraphs contributing to
	the coefficient $c_{2q+1}$ are precisely the simple $(2q+1)$-cycles.
	
	Each such cycle has one component and one cyclic component, hence
	contributes
	$
	(-1)^1 2^1=-2
	$
	to Sachs' formula. Therefore
	$
	c_{2q+1}
	=
	-2\,\#\mathcal C_{2q+1}(G).
	$
	After setting $w=r=1$, every cycle weight is equal to $1$, so
	$
	\left.C_{2q+1}(\overline A)\right|_{w=r=1}
	=
	\#\mathcal C_{2q+1}(G)
	=
	-\frac12c_{2q+1},
	$
	as claimed.
\end{proof}

\begin{remark}
	\label{rem:q1}
	For $q=1$ the list $CWR_{3},\ldots,CWR_{2q-1}$ is empty, so the hypothesis of
	Corollary~\ref{cor:first-odd-cwr} is vacuous and the statement reduces to the
	classical identity $c_{3}=-2\,c_{3}(G)$, where $c_{3}(G)$ is the number of
	triangles; compare the trace formula $c_{3}(G)=\tr(A^{3})/6$ recalled after
	Proposition~\ref{prop:cwr23}.
\end{remark}

%%%%%%%%%%%%
\section{Conclusion}

The weighted adjacency matrices of the consolidated Tait graphs contain the
entire $CWR$ sequence once repeated-vertex closed walks are removed. Vertex
variables make this removal uniform: squarefree extraction gives $CWR_k$ for
every $k\ge3$, the trace--log packages all lengths into a finite polynomial, and
Boolean M\"obius inversion converts the same information into traces of
principal submatrices. The explicit formulas for $CWR_4$ and $CWR_5$ are the
first low-order instances beyond the previously known cases $k=2,3$, and the
odd components admit in addition the spectral description of
Section~\ref{sec:odd-cycles}.

The construction also separates the graph-theoretic and knot-theoretic parts of
the argument. The matrix identities hold for arbitrary finite simple loopless
weighted graphs, whereas invariance under changing a reduced alternating
diagram is supplied by the existing $CWR$ invariance theorem. This makes the
formulas suitable both for symbolic computation of the invariant and for
comparison with general exact algorithms for weighted simple-cycle counting.

\section*{Acknowledgements}

The author used OpenAI's ChatGPT for editorial assistance and for exploring formulations.


\begin{thebibliography}{99}

\bibitem{AKT21}
C.~Adams, E.~Flapan, A.~Henrich, L.~H.~Kauffman,
L.~D.~Ludwig and S.~Nelson (eds.),
\emph{Encyclopedia of Knot Theory},
CRC Press, 2021.

\bibitem{Bap14}
R.~B.~Bapat,
\emph{Graphs and Matrices},
2nd ed.,
Universitext,
Springer, London, 2014,
\url{https://doi.org/10.1007/978-1-4471-6569-9}.

\bibitem{Die17}
R.~Diestel,
\emph{Graph Theory},
5th ed.,
Graduate Texts in Mathematics, vol.~173,
Springer, Berlin, 2017,
\url{https://doi.org/10.1007/978-3-662-53622-3}.

\bibitem{DMSY21}
A.~Dochtermann, E.~Meyers, R.~Samavedam and A.~Yi,
\emph{Integral flow and cycle chip-firing on graphs},
arXiv:2006.13397v3, 2021.

\bibitem{FS09}
P.~Flajolet and R.~Sedgewick,
\emph{Analytic Combinatorics},
Cambridge University Press, Cambridge, 2009.

\bibitem{GKW19}
P.-L.~Giscard, N.~Kriege and R.~C.~Wilson,
\emph{A general purpose algorithm for counting simple cycles and simple paths
	of any length},
Algorithmica \textbf{81} (2019), no.~7, 2716--2737,
\url{https://doi.org/10.1007/s00453-019-00552-1}.

\bibitem{GRW18}
P.-L.~Giscard, P.~Rochet and R.~C.~Wilson,
\emph{A Hopf algebra for counting cycles},
Discrete Math. \textbf{341} (2018), no.~5, 1439--1448,
\url{https://doi.org/10.1016/j.disc.2017.10.002}.

\bibitem{Gre17}
J.~E.~Greene,
\emph{Alternating links and definite surfaces},
Duke Math. J. \textbf{166} (2017), no.~11, 2133--2151.

\bibitem{HM71}
F.~Harary and B.~Manvel,
\emph{On the number of cycles in a graph},
Mat. \v{C}asopis Sloven. Akad. Vied \textbf{21} (1971), 55--63.

\bibitem{JabWRP24}
M.~Jab{\l}onowski,
\emph{A polynomial pair invariant of alternating knots and links},
J. Knot Theory Ramifications \textbf{33} (2024), no.~14, 2450048,
\url{https://doi.org/10.1142/S0218216524500482}.

\bibitem{JabCWR24}
M.~Jab{\l}onowski,
\emph{CWR sequence of invariants of alternating links and its properties},
J. Knot Theory Ramifications \textbf{33} (2024), no.~14, 2450052,
\url{https://doi.org/10.1142/S0218216524500524}.

\bibitem{Kin22}
T.~Kindred,
\emph{A geometric proof of the flyping theorem},
arXiv:2008.06490v2, 2022.

\bibitem{KK19}
U.~Knauer and K.~Knauer,
\emph{Algebraic Graph Theory: Morphisms, Monoids and Matrices},
2nd ed., De Gruyter Studies in Mathematics, vol.~41,
De Gruyter, Berlin/Boston, 2019.

\bibitem{LW00}
M.~Lien and W.~Watkins,
\emph{Dual graphs and knot invariants},
Linear Algebra Appl. \textbf{306} (2000), 123--130.

\bibitem{MP93}
K.~Murasugi and J.~H.~Przytycki,
\emph{An index of a graph with applications to knot theory},
Mem. Amer. Math. Soc. \textbf{106} (1993), no.~508.

\bibitem{Prz06}
J.~H.~Przytycki,
\emph{Knots: From combinatorics of knot diagrams to combinatorial topology
	based on knots},
arXiv:math/0601227v1, 2006,
Chapter~V: \emph{Graphs and links}.

\bibitem{SS08}
R.~Schott and G.~S.~Staples,
\emph{Nilpotent adjacency matrices, random graphs, and quantum random
variables},
J. Phys. A: Math. Theor. \textbf{41} (2008), no.~15, 155205,
\url{https://doi.org/10.1088/1751-8113/41/15/155205}.

\bibitem{SS11}
R.~Schott and G.~S.~Staples,
\emph{Complexity of counting cycles using zeons},
Comput. Math. Appl. \textbf{62} (2011), no.~4, 1828--1837,
\url{https://doi.org/10.1016/j.camwa.2011.06.026}.


\bibitem{SW19}
D.~S.~Silver and S.~G.~Williams,
\emph{Knot invariants from Laplacian matrices},
J. Knot Theory Ramifications \textbf{28} (2019), 1950058.

\bibitem{Sin26}
T.~Sinclair,
\emph{Finite free convolution via reproducing kernels and squarefree algebras},
arXiv:2606.10870v2, 2026,
\url{https://doi.org/10.48550/arXiv.2606.10870}.

\bibitem{Sta08}
G.~S.~Staples,
\emph{A new adjacency matrix for finite graphs},
Adv. Appl. Clifford Algebras \textbf{18} (2008), 979--991,
\url{https://doi.org/10.1007/s00006-008-0116-5}.

\bibitem{Thi87}
M.~B.~Thistlethwaite,
\emph{A spanning tree expansion of the Jones polynomial},
Topology \textbf{26} (1987), no.~3, 297--309.

\bibitem{Tra04}
P.~Traczyk,
\emph{A combinatorial formula for the signature of alternating diagrams},
Fund. Math. \textbf{184} (2004), 311--316.

\bibitem{WW19}
S.~Wagner and H.~Wang,
\emph{Introduction to Chemical Graph Theory},
Discrete Mathematics and Its Applications,
CRC Press, Boca Raton, 2019.

\bibitem{Yad23}
S.~K.~Yadav,
\emph{Advanced Graph Theory},
Springer, Cham, 2023,
\url{https://doi.org/10.1007/978-3-031-22562-8}.

\end{thebibliography}
\end{document}